\documentclass[
11pt]
{amsart}

\usepackage[pagebackref]{hyperref}
\usepackage[margin=1.25in, marginpar=0.5in, marginparsep=.1in]{geometry}
\usepackage{hyperref}

\usepackage{cite}
\usepackage{amssymb}
\usepackage{amsmath}
\usepackage{amsthm}
\usepackage{amsfonts}
\usepackage{graphicx}%
\usepackage{comment}
\usepackage{bbm}
\usepackage{xcolor}
\usepackage[toc,page]{appendix}
\usepackage{mathtools}
\usepackage{amsmath}
\usepackage[utf8]{inputenc}

\numberwithin{equation}{section}

\theoremstyle{plain} 
\newtheorem{thm}{Theorem}[section] 
 
\newtheorem{lem}[thm]{Lemma} 
\newtheorem{prop}[thm]{Proposition} 
\newtheorem{notation}[thm]{Notation} 
\newtheorem{rmk}[thm]{Remark} 
 
\newtheorem*{acknowledgement}{Acknowledgement}

\newcommand{\R}{\mathbb{R}}  
\newcommand{\E}{\mathbb{E}} 
\newcommand{\Prob}{\mathbb{P}}

\title[Chung's LIL and small balls for spherical random fields]{Small fluctuations for time-dependent \\spherical random fields}

\author[Carfagnini]{Marco Carfagnini{$^{\dag }$}}
\thanks{\footnotemark {$\dag$} Research was supported in part by an AMS-Simons Travel Grant}
\address{Department of Mathematics\\
University of California, San Diego\\
La Jolla, CA 92093-0112,  U.S.A.}
\email{mcarfagnini@ucsd.edu}

\author[Todino]{Anna Paola Todino{$^{\star}$}}
\thanks{\footnotemark {$\star$} Research was supported in part by ``INdAM -
GNAMPA Project", codice CUP E53C23001670001.\\}
\address{ Dipartimento di Scienze e Innovazione Tecnologica\\
Universit\`a del Piemonte Orientale\\
Italy}
\email{annapaola.todino@uniupo.it}

\keywords{Time-dependent spherical random fields, spherical harmonics, small ball probabilities, Chung's law of the iterated logarithm.}

\subjclass{60G60, 60G17, 60G15, 42C05. }


\begin{document}

	\begin{abstract}
    We consider time-dependent space isotropic and time stationary spherical Gaussian random fields. We establish  Chung's law of the iterated logarithm  and solve the  small probabilities problem. Our results depend on the high-frequency behaviour of the angular power spectrum: the speed of decay of the small ball probability is faster as either the memory parameter or the space-parameter decreases. \end{abstract}

\maketitle

\tableofcontents

\section{Introduction}
Time-dependent spherical random fields have recently aroused lots of interest as mathematical models in many applications. In particular atmospheric and climate sciences, medical imaging, geophysics, astrophysics and cosmology, see for instance \cite{AlegriaPorcu, BergPorcu2017,CaponeraMarinucci, caponera2023multiscale,CAP, Gal,WP} for recent contributions. In this paper we focus on sample path properties of time-dependent spherical random fields, i.e. defined on $\mathbb{S}^2\times \mathbb{R}$. Fluctuations of geometric functionals associated to such fields have been studied recently in \cite{MRV1, MRV2}. In particular we establish  Chung's law of the iterated logarithm and  small deviations.

We say that a real-valued random field $\{ X_t , t \in \R^{n}\}$ satisfies Chung's law of the iterated logarithm (LIL) at $t_{0} \in \mathbb R^{n}$ with rate function $\phi$ if there exists a constant $C$ such that 
\begin{align}
    & \liminf_{r \rightarrow 0} \phi(r) \max_{t\in B_{r} (t_{0})} \vert X_t - X_{t_{0}} \vert =C  \; \; \; a.s. \label{eqn.chung.LIL}
\end{align}
where $B_{r} (t_{0} )$ is an appropriate ball of radius $r>0$ centered at $t_{0}$. When $X_t$ is a Brownian motion and $t_{0} = 0$, it was proven in a famous paper by K.-L. Chung in 1948 \cite{Chung1948} that \eqref{eqn.chung.LIL} holds with $\phi (r) =  \sqrt{\frac{\log |\log r|}{r}}$ and $C= \frac{\pi}{\sqrt{8}}$. 

Chung's LIL has many applications and we refer to the survey \cite{LiShao2001} for more details. In this paper we are mostly interested in connections to small probabilities. The small ball problem (SBP) consists in finding a function $\theta$ and a constant $c$ such that 
\begin{equation}\label{eqn.sd.general}
    \lim_{\varepsilon \rightarrow 0} \theta (\varepsilon) \log \Prob \left( \max_{t\in B_{r} (t_{0})} \vert X_t -X_{t_0} \vert  < \varepsilon \right) =c. 
\end{equation}

The small ball problem and Chung's LIL for stochastic processes have been extensively studied, and the literature on the subject is vast. In \cite{BaldiRoynette1992b} the authors considered the case of a one-dimensional Brownian motion and H\"older norms, in \cite{KuelbsLi1993a} a Brownian sheet in H\"older norms has been considered, \cite{KhoshnevisanShi1998} studied the integrated Brownian motion in the uniform norm,  and \cite{ChenLi2003} the $m$-fold integrated Brownian motion in both the uniform and $L^{2}$-norm, and in \cite{Carfagnini2022} the $n$-step Kolmogorov diffusion. In \cite{Remillard1994} and \cite{CG20} a small deviation problem and Chung's law of iterated logarithm is proven for a class of stochastic integrals and for the hypoelliptic Brownian motion on the Heisenberg group respectively. 

As far as random fields $\{ X_{t} , t \in \R^{n} \}$ $n \geq 1$ on Euclidean spaces  are concerned,  sample path properties have been studied in both the isotropic and anisotropic case by many authors, see for instance \cite{LWX15, Yimin, LX10, MWX13, T, Xia07, WangXiaoBM,Xia09, WX20, LX19,LX23}. In particular, Chung's LIL for Gaussian random fields with stationary increments has been proven in \cite{LX10}. Estimates on small ball probabilities of random fields have been extensively studied as well, see \cite{T}.

In recent years, sample paths properties of random fields defined on manifolds have also been investigated. In particular, in the case of the sphere $\mathbb{S}^2$, differentiability and H\"older continuity properties of  sample paths have been studied in
\cite{HLS18,LS15}; H\"older conditions of local times and exact moduli of non-differentiability for spherical Gaussian fields in \cite{lan2018holder};
a strong local non determinism of an isotropic spherical Gaussian field and the exact  uniform modulus of continuity have been investigated  in \cite{MXL}. These last results have been then generalized to isotropic Gaussian random fields on compact two-point homogeneous space in \cite{LuMaXi}. Finally we refer to \cite{LAN2019} for regularity properties of the solution to a stochastic wave equation given by a fractional Gaussian noise on $\mathbb{S}^2$. 

In the current paper we consider a time-dependent spherical Gaussian random field $\{ T(x,t) , \; (x,t) \in \mathbb S^{2} \times [0,T]\}$ which is stationary in time and isotropic in space. Our main results are collected in  Theorem \ref{thm.Chung.lil} and Theorem \ref{thm.small.deviations}. The former provides the rate function for Chung's LIL, and the latter describes the small ball probability asymptotics. Both the rate function in \eqref{eqn.chung.LIL} and the asymptotics in \eqref{eqn.sd.general} depend on the high frequency behaviour of the angular power spectrum of the field $\{ T(x,t) , \; (x,t) \in \mathbb S^{2} \times [0,T]\}$. The asymptotic behaviour of the spectral density depends on a space-parameter $\alpha$ and a time-parameter $\beta$. The former describes the high-energy behaviour of the field and the latter can be thought as a "memory" parameter.  The small ball probability dacays to zero  like $\exp \left( \frac{-c}{\varepsilon^{\frac{2}{\beta} + \frac{4}{\alpha-2}}} \right)$, for some positive constant $c$. In particular, for a fixed $\alpha$, the smaller the time-memory parameter $\beta$ the faster the speed of the decay. We refer to \eqref{eqn.phi} and \eqref{eqn.psi} for a more explicit rate and asymptotics function.  

Let us now explain the strategy of the proof. Generally, the first step is to show that the LHS of \eqref{eqn.chung.LIL} is measurable with respect to a tail $\sigma$-field and hence constant. Our approach is to use Theorem \ref{thm.small.deviations} to prove that the LHS of \eqref{eqn.chung.LIL} is constant. More precisely, we first show that is almost surely positive and finite. Then by means of stationarity and isotropy of the field we are able to connect \eqref{eqn.chung.LIL} to \eqref{eqn.sd.general}. Lastly, we show that $C$ in \eqref{eqn.chung.LIL} is constant by giving an explicit expression in terms of $c$ in \eqref{eqn.sd.general}. 

The paper is organized as  follows. In Section \ref{sec2} we recall Talagrand's bounds for random fields, time-spherical Gaussian random fields, and then state the main results of this paper, namely Theorem \ref{thm.Chung.lil} and Theorem \ref{thm.small.deviations}. Section 3 contains some preliminary results which are needed to prove Chung's LIL in Section 4 and small probabilities in Section 5. Lastly, in Appendix A and Appendix B we collect proofs of some technical results.

\section{Background and main results}\label{sec2}

\subsection{Upper and lower bounds for Gaussian random fields} Small ball probabilities of random fields have been extensively studied, see \cite{MWX13,LX,Xia09,X97} and references therein. In this sections we recall results from \cite{T} that we will use throughout the paper.  Let $\{ X_{t} , \, t\in \R^{n} \}$ be a real-valued random field, and $d_{X}$ be its canonical metric, that is,
 \begin{equation}\label{eqn.canonical.metric}
     d_{X} (t,s) := \sqrt{\E \left[ ( X_{t} - X_{s} )^{2} \right]}.
 \end{equation}
One of the main ingredients in our proofs is Talagrand's bound on small ball probabilities for Gaussian processes.

\begin{lem} \cite[Lemma 2.1 and Lemma 2.2]{T}\label{lemma.talagrand}
    Let $\{ X_{t} , \, t\in K \}$ be a separable, real-valued, mean zero Gaussian process indexed by a compact set $K$ with canonical metric $d_{X}$. Let $N(K,d_{X} , \varepsilon)$ be the smallest number of $d_{X}$-balls of radius $\varepsilon$ needed to cover the set $K$. Suppose that there is a function $\Psi : (0,\delta ) \rightarrow (0 , \infty)$ such that $N(K,d_{X} , \varepsilon) \leqslant \Psi (\varepsilon)$ for all $\varepsilon \in (0,\delta)$ and there are constants $1< a_{1} \leqslant a_{2}$ such that for all $\varepsilon \in (0,\delta)$
    \begin{align}\label{eqn.condition.talagrand}
        a_{1} \Psi (\varepsilon ) \leqslant \Psi \left(\frac{\varepsilon}{2} \right) \leqslant a_{2} \Psi (\varepsilon).
    \end{align}
    Then, there is a finite constant $k$ depending only on $a_{1}$ and $a_{2}$ such that for all $u\in (0,\delta)$
    \begin{align}
        \mathbb{P} \left( \sup_{s,t \in K} \vert X_{t} - X_{s} \vert \leqslant u \right) \geqslant \exp\left(- k \Psi (u) \right).
    \end{align}

    Moreover, there exists a positive constant $c$ such that for any $u>0$
    \begin{equation}\label{lemma.talagrand.diameter}
        \mathbb{P} \left( \max_{s,t \in K} \vert X_{s} - X_{t} \vert \geqslant c \left( u+ \int_{0}^{D} \sqrt{\log N(K,d_{X} , \varepsilon)} d\varepsilon \right) \right) \leqslant \exp \left(- \frac{u^{2}}{D^{2}} \right),        
    \end{equation}
    where  $D$ is the diameter of $K$ with respect to $d_{X}$.
\end{lem}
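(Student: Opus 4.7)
The statement bundles two rather different facts that I would prove separately: the tail bound \eqref{lemma.talagrand.diameter}, which follows from Dudley's chaining combined with Borell--TIS, and the small-ball lower bound, which is the substantive content of Talagrand's lemma.

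For \eqref{lemma.talagrand.diameter}, first apply Dudley's entropy inequality to the Gaussian process $\{X_t\}_{t\in K}$ to obtain
\[
\E\!\left[\sup_{s,t\in K}|X_t-X_s|\right]\leq c_0\int_0^D\sqrt{\log N(K,d_X,\varepsilon)}\,d\varepsilon.
\]
The functional $F(\omega):=\sup_{s,t\in K}|X_t(\omega)-X_s(\omega)|$ is $1$-Lipschitz with respect to the Cameron--Martin norm, and $\sup_{s,t\in K}\mathrm{Var}(X_t-X_s)\leq D^2$. The Borell--TIS inequality therefore gives $\Prob(F\geq\E F+u)\leq\exp(-u^2/(2D^2))$, and absorbing numerical constants into $c$ yields \eqref{lemma.talagrand.diameter}.

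For the small-ball lower bound, my plan is a multi-scale chaining construction. Fix $u\in(0,\delta)$ and set $\varepsilon_k:=u\,2^{-k}$. For each $k\geq 0$, choose an $\varepsilon_k$-net $N_k\subset K$ with $|N_k|\leq\Psi(\varepsilon_k)$ and a parent map $\pi_k:K\to N_k$ with $d_X(t,\pi_k(t))\leq\varepsilon_k$. The telescoping
\[
X_t-X_s=(X_{\pi_0(t)}-X_{\pi_0(s)})+\sum_{k\geq 1}\bigl[(X_{\pi_k(t)}-X_{\pi_{k-1}(t)})-(X_{\pi_k(s)}-X_{\pi_{k-1}(s)})\bigr]
\]
reduces the event $\{\sup_{s,t}|X_t-X_s|\leq u\}$ to the intersection of events of the form ``all link increments at scale $k$ are bounded by some tolerance $\lambda_k$.'' Choosing $\lambda_k>0$ with $\lambda_0+2\sum_{k\geq 1}\lambda_k\leq u$, applying \v{S}id\'ak's lemma to decouple the centered Gaussian links at each scale (at level $k$ there are at most $|N_k|\leq\Psi(\varepsilon_k)$ distinct links, each of variance at most $(3\varepsilon_{k-1}/2)^2$), and using the two-sided doubling \eqref{eqn.condition.talagrand} to sum the resulting logarithms, should yield a lower bound of the form $\exp(-k\Psi(u))$ with $k=k(a_1,a_2)$.

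The main obstacle is calibrating the tolerances $\lambda_k$. The lower doubling $a_1\Psi(\varepsilon)\leq\Psi(\varepsilon/2)$ forces $|N_k|$ to grow at least geometrically, so the product probabilities must be balanced delicately: each $\lambda_k$ must be small enough that $\sum\lambda_k\leq u$, yet large enough relative to the scale $\varepsilon_k$ that the marginal probabilities $\Prob(|Z|\leq\lambda_k/C\varepsilon_k)$ do not collapse faster than $|N_k|$ grows. The upper doubling $\Psi(\varepsilon/2)\leq a_2\Psi(\varepsilon)$ is then what ensures the resulting geometric sum telescopes to something comparable to $\Psi(u)$ alone rather than diverging, and making this telescoping simultaneously in the tolerances and in the marginal probabilities is the technical heart of Talagrand's argument.
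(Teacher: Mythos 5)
This lemma is not proved in the paper at all: it is quoted verbatim from Talagrand \cite[Lemma 2.1 and Lemma 2.2]{T}, so there is no internal argument to compare yours against; what can be assessed is whether your sketch would actually deliver the stated bounds. Your treatment of \eqref{lemma.talagrand.diameter} is fine and standard (Dudley's entropy bound for $\E\sup_{s,t}|X_t-X_s|$ plus Borell--TIS with variance proxy $D^2$), and this is indeed how that inequality is usually obtained.

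The small-ball lower bound, however, is left with a genuine gap, and not only because you defer the calibration of the $\lambda_k$. As you have set it up, the dyadic scales start at $\varepsilon_0=u$, so the coarsest level consists of up to $\Psi(u)$ net points whose mutual increments have standard deviation as large as the diameter $D$, not $u$. Requiring all of these to be at most $\lambda_0\asymp u$ and applying \v{S}id\'ak gives a factor of order $(cu/D)^{\Psi(u)}$, i.e.\ a bound $\exp\bigl(-C\,\Psi(u)\log(D/u)\bigr)$, which misses the claimed $\exp(-k\Psi(u))$ by a logarithmic factor that blows up as $u\to 0$; no re-tuning of the fine-scale tolerances can repair this, since the loss occurs entirely at the top level. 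The missing idea is to run the multiscale construction \emph{upward} as well, from scale $u$ to the diameter scale (so that the top net is a single point and every link at scale $\varepsilon$ has variance $O(\varepsilon^2)$), and to use the lower doubling $a_1\Psi(\varepsilon)\leqslant\Psi(\varepsilon/2)$, i.e.\ $a_1>1$, to guarantee that the covering numbers at scales coarser than $u$ decay geometrically relative to $\Psi(u)$, so that the coarse-scale tolerances $\lambda\asymp u\,a_1^{-j/2}$ are summable while the associated $\log$-costs sum to $O(\Psi(u))$; the upper doubling then controls the scales finer than $u$ exactly as you indicate. Without this step (which is precisely the technical heart you acknowledge omitting), the argument as written does not prove the first inequality of the lemma, whereas the paper simply invokes Talagrand's result and never needs to.
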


\subsection{Time-dependent spherical random fields}

Let $\mathbb{S}^2$ be the unit 2-dimensional sphere and $\Delta_{\mathbb{S}^2}$ be the Laplace-Beltrami operator on $\mathbb{S}^2$ defined as
\begin{equation*}
\dfrac{1}{\sin \theta }\dfrac{\partial }{\partial \theta }\bigg\{\sin \theta 
\dfrac{\partial }{\partial \theta }\bigg\}+\dfrac{1}{\sin ^{2}\theta }\dfrac{%
	\partial ^{2}}{\partial \varphi ^{2}},\mbox{ }0\leq \theta \leq \pi ,\mbox{ }%
0\leq \varphi < 2\pi .
\end{equation*} It is well-known that the spectrum of $\Delta_{\mathbb{S}^2}$ is purely discrete, its eigenvalues are of the form $-\lambda_\ell=-\ell(\ell+1)$ where $\ell\in \mathbb N$, and the eigenspace corresponding to $\lambda_\ell$ is the $(2\ell+1)$-dimensional space $ \mathcal{S}_{\ell} :=  \text{span}\{Y_{\ell m}(\cdot): m=-\ell, \dots, \ell\}$ of degree-$\ell$ spherical harmonics. Note that $\{ \mathcal{S}_{\ell}, \, \ell \in \mathbb N \}$ is an $L^2(\mathbb S^2, dx)-$orthonormal basis, where $dx$ denotes the Lebesgue measure on $\mathbb S^{2}$. In particular, the functions $Y_{\ell m}$  satisfy
$$\Delta_{\mathbb{S}^2}Y_{\ell m}+\lambda_\ell Y_{\ell m}=0,$$ 
and the addition and duplication formulae
\begin{equation}
\sum_{m=-\ell }^{\ell }Y_{\ell m}(x){Y}_{\ell m}(y)=\frac{2\ell +1}{4\pi }%
P_{\ell }(\left\langle x,y\right\rangle )\text{ ,}  \label{addition}
\end{equation}%
\begin{equation}
\int_{\mathbb{S}^{2}}\frac{2\ell +1}{4\pi }P_{\ell }(\left\langle
x,z\right\rangle )\frac{2\ell +1}{4\pi }P_{\ell }(\left\langle
z,y\right\rangle )dz=\frac{2\ell +1}{4\pi }P_{\ell }(\left\langle
x,y\right\rangle )\text{ ,}  \label{duplication}
\end{equation}
for all $x,y \in \mathbb{S}^{2},$ and $P_\ell:[-1,1]\to \mathbb{R}$, denotes the Legendre polynomial of degree $\ell$:
$$P_\ell(t)= \frac{1}{2^{\ell} \ell!} \frac{d^\ell}{dt^\ell}(t^2-1)^\ell \quad \ell=1,2,\dots$$ and $P_\ell(1)=1.$ We refer to \cite[Eq. (3.42) and Sec. 13.1.2]{MP12} for more details.

A space-time real-valued spherical random field $T$ is a collection of random variables $\{ T(x,t) , \, (x,t) \in \mathbb S^{2}\times \R\}$ defined on a common probability space $(\Omega ,\mathcal{F} , \Prob )$ such that $T: \Omega \times \mathbb S^{2} \times \R \longrightarrow \R$ is a $\mathcal{F}\times \mathcal{F} ( \mathbb S^{2} \times \R )$-measurable map, where $\mathcal{F} ( \mathbb S^{2} \times \R )$ denotes the Borel sigma-algebra of $\mathbb S^{2} \times \R$. We are interested in a space-time real-valued spherical random field $T$ that is 

(1) Gaussian, i.e. its finite-dimensional distributions are Gaussian;

(2) centered, i.e. $\E \left[ T(x,t) \right] =0$ for every $(x,t) \in \mathbb S^{2}\times \R$;

(3) isotropic in space and stationary in time, that is, $T(x,t)$ and $T(g.x, \tau + t)$ have the same distribution for every  $ (x,t) \in \mathbb S^{2}\times \R$, $g \in SO(3)$, and  $\tau \in \R$;

(4) mean-square continuous. 

Condition (3) implies that, for every $ (x,t), (y,s) \in \mathbb S^{2}\times \R$
\[
\E \left[ T(x,t) T(y,s) \right] = \Gamma ( \langle x , y \rangle , t-s ),
\]
where $\langle \cdot , \cdot \rangle$ denotes the standard inner product in $\R^{3}$, and  $\Gamma : [-1,1] \times \R \rightarrow \R$ is a positive-semidefinite function. Under the above conditions, it is well known that  \cite[Theorem 3.3]{BergPorcu2017} \cite[Theorem 3]{MaMalyarenko2020} $\Gamma$ can be written as a uniformly convergent series 
\begin{equation}\label{eqn.cov.field}
    \Gamma(\eta , \tau )= \sum_{\ell=0}^{\infty}  \frac{2\ell+1}{4\pi}  C_\ell(\tau ) P_\ell(\eta) , \quad (\eta , \tau) \in [-1 , 1 ] \times \R ,
\end{equation}
where $\{C_{\ell} (\cdot ) \}_{\ell =0}^{\infty}$ is a sequence of continuous positive semi-definite functions on $\R$, and $\{P_{\ell} \}_{\ell =0}^{\infty}$ denotes the Legendre polynomial. Let 
\begin{align*}
    & a_{\ell m} (t) := \int_{\mathbb S^{2}} T(x,t) Y_{\ell m} (x) dx, \quad t\in \R,
\end{align*}
where $Y_{\ell m}$ denotes the $(\ell , m)-$th spherical harmonic.  By our assumptions it follows that $\{a_{\ell m} , \, m= -\ell , \ldots , \ell \}_{ \ell =0}^{\infty}$ is a family of independent, stationary, centered Gaussian processes on $\R$ such that 
\begin{equation}\label{eqn.covariance.alm}
    E[a_{\ell m}(t)a_{\ell' m'}(s)]=C_\ell(t-s) \delta_\ell^{\ell'} \delta_m^{m'}.
\end{equation}
From the spectral representation for $\Gamma$ one can deduce the following Karhunen–Loève expansion 
\begin{equation}\label{eqn.time.dep.random.field}
    T(x,t)=\sum_{\ell=0}^{\infty} \sum_{m=-\ell}^{\ell} a_{\ell m}(t) Y_{\ell m }(x),
\end{equation}
where the series converges in $L^{2} \left( \Omega \times \mathbb S^{2} \times [0,T] \right)$ for any $T>0$.

Throughout the paper we will assume the following. 

\textbf{Assumption I} There exists  $0<\beta <2$ such that the family of functions $\{C_{\ell} (\cdot ) \}_{\ell =0}^{\infty}$ satisfies 
\begin{equation}\label{eqn.covariance.function}
    C_{\ell} (\tau) = C_\ell(0) ( 1- | \tau |^{\beta}  ),
\end{equation}
for all $ | \tau | <1$. Moreover, there exists an $\alpha$ with  $2+\beta < \alpha <4$ such that 
\begin{align}
   & C_{\ell} (0) = G(\ell) \ell^{-\alpha} >0 , \; \; \ell = 1, 2 , \ldots , \label{condition.power.spectrum} ,
    \\
    & C_{0} (0) = c_{1},
\end{align}
and we assume that 
\begin{align*}
    c_{0}^{-1} \leqslant G(\ell ) \leqslant c_{0},
\end{align*}
for some finite constants $c_{0} \geqslant 1$, $c_{1} >0$. 

\begin{rmk}
    The isotropic Gaussian spherical random field (not time-dependent) $\{Z(x),\, x\in \mathbb{S}^2\}$ with zero mean, can be represented by
\begin{eqnarray}\label{field1}
Z(x)&=&\sum_{\ell=1}^{\infty}\sqrt{C_\ell}T_\ell(x)=\sum_{\ell=1}^{\infty} \sqrt{C_\ell} \sum_{m=-\ell}^{\ell} a_{\ell m} Y_{\ell m}(x), \quad x\in \mathbb S^2,\\
Z_\ell(x)&:=&\sum_{m=-\ell}^{\ell} a_{\ell m} Y_{\ell m}(x), \quad x\in \mathbb S^2,
\end{eqnarray}
where the random variables $\{a_{\ell m}, m=-\ell,\dots, \ell\}$ are standard Gaussian and independent, 
the sequence $\{C_\ell\}_{\ell=0,1,\dots}$ is called angular power spectrum of $Z(x)$ and it is non-negative and such that $\sum_{\ell} C_\ell \frac{2\ell+1}{4\pi}=\mathbb{E}Z^2<\infty$ (see \cite{MP12, MP13}). The series  (\ref{field1}) converges in $L^2(\Omega \times \mathbb{S}^2)$, and in   $L^2(\Omega)$ at every fixed $x$.

The Assumption I on the power spectrum translates here simply as the following.

\textbf{Condition (A):} 
The angular power spectrum $C_\ell$ is such that 
\begin{align}
    & C_{\ell} = G(\ell) \ell^{-\alpha} >0 , \; \; \ell = 1, 2 , \ldots , \label{condition.power.spectrum1}
    \\
    & C_{0} (0) = c_{1}, \notag
\end{align}
where $\alpha>2$ is a constant, and we assume that 
\begin{align*}
    c_{0}^{-1} \leqslant G(\ell ) \leqslant c_{0},
\end{align*}
for some finite constant $c_{0} \geqslant 1$, $c_{1} >0$. 

\end{rmk}

We recall that \eqref{condition.power.spectrum1} has been used in \cite{MXL} and it does not require stronger regularity assumptions such as the ones used in  \cite{BKMP09, LM09, DMT24, ST23} among others. Moreover, Condition (A) also cover the Matérn class of covariance functions  \cite{GF}, which is the prototype of covariance models in the applications. The Condition \eqref{condition.power.spectrum1} with $2< \alpha <4$ is suggested by many models for data analysis of the  Cosmic Microwave Background radiation \cite{Planck, CdM}. For the time-dependent case, the restriction  $2+\beta < \alpha <4$ will be needed in  Proposition \ref{thm.upper.bound}.

\begin{rmk}
    We remark that there is a fractal behaviour for $2<\alpha <4$. For these values of $\alpha$ the modulus of continuity decays slower than linearly with respect to the angular distance, and hence making the sample function of $Z(x)$ non-differentiable,  see \cite[Theorem 2]{MXL}.  Many applied fields do satisfy $2<\alpha < 4$. For example, the value of $\alpha$ for Cosmic Microwave Background data is known to be very close to $2$, see \cite{Plank2014}. For this reason, the estimate \eqref{eqn.bound.variance.1} is not believed to hold true for $\alpha >4$, but it is conjectured for the critical case $\alpha =4$, and we refer to \cite{MXL} for more details. 
\end{rmk} 

Let us consider the function $\mu_{\alpha , \beta} : \mathbb S^{2} \times \R \times \mathbb S^{2} \times \R \longrightarrow [0 , \infty )$ given by  
        \begin{equation}\label{eqn.induced.dist.mu}
            \mu^{2}_{\alpha, \beta} ( (x,t) , (y,s) ) := |t-s|^{\beta} + (1- |t-s|^{\beta}) \rho_{\alpha}^{2} (d_{\mathbb S^{2}} (x,y) ),
        \end{equation}        
    where 
    \begin{equation}\label{eqn.rho}
        \rho_{\alpha} (t) := t^{\frac{\alpha-2}{2}}.
    \end{equation}
    In \cite{MXL} it was shown that the function $\rho_{\alpha}$ is equivalent to the canonical metric on $\mathbb S^{2}$ induced by a spherical random field satisfying \eqref{condition.power.spectrum1} with $2< \alpha <4$. This function was introduced in \cite{MXL} to prove the strong non-local determinism for a spherical Gaussian random field $Z$ on the sphere. More precisely, if \eqref{condition.power.spectrum1} holds for some  $2< \alpha <4$, then there exist two positive and finite constants $C$ and $\varepsilon_{0}$ such that for all integers $n\geqslant 1$, and all $x_{0},\ldots , x_{n} \in \mathbb{S}^{2}$ with $\min_{1\leqslant k \leqslant n} d_{\mathbb{S}^{2}} (x_{0} , x_{k}) \leqslant \varepsilon_{0}$ we have that 
    \begin{equation}\label{eqn.bound.variance.1}
        \text{Var} \left( Z(x_{0}) \, | \, Z(x_{1} ), \ldots , Z(x_{n})   \right) \geqslant C \min_{1\leqslant k \leqslant n} \rho_{\alpha} \left(d_{\mathbb{S}^{2}} (x_{0}, x_{k} )\right)^{2},
    \end{equation}
    where $\rho_{\alpha}$ is given by \eqref{eqn.rho}.

    One of our main ingredients is Proposition \ref{thm.non.local.deter}, where we prove a strong non-local determinism similar to  \eqref{eqn.bound.variance.1} for the time-space random field $T(x,t)$. We end this section by showing that the function $\mu_{\alpha , \beta}$ is equivalent to the canonical distance on $\mathbb S^{2} \times \R$ induced by $T(x,t)$.

\begin{prop}\label{prop.bound.dist}
    Let $\{ T(x,t) , \, (x, t) \in \mathbb S^{2} \times \R \}$ be a Gaussian random field satisfying Assumption I. Then  there exist  constants $1\leqslant c_{\alpha} < \infty$ and $0< \varepsilon <1$ such that for all $x,y\in \mathbb S^{2} $ with $d_{\mathbb S^{2} } (x,y) < \varepsilon $ and all $s,t \in \R$ with $| t-s |<1$ one has that 
    \begin{align}
        c_{\alpha}^{-1} \mu^{2}_{\alpha, \beta} ( (x,t) , (y,s) )\leqslant d_{T}^{2} ( (x,t) , (y,s) ) \leqslant c_{\alpha} \mu^{2}_{\alpha, \beta} ( (x,t) , (y,s) ).
    \end{align}
\end{prop}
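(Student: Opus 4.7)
The plan is to compute the canonical distance $d_T^{2}$ explicitly from the covariance structure and reduce the statement to the MXL equivalence for the purely spatial field.

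First I would expand
\begin{equation*}
d_T^{2}\bigl((x,t),(y,s)\bigr) = 2\Gamma(1,0) - 2\Gamma(\langle x,y\rangle,\,t-s)
\end{equation*}
using the spectral representation \eqref{eqn.cov.field}. Under Assumption I, for $|t-s|<1$ I may substitute $C_{\ell}(t-s)=C_{\ell}(0)(1-|t-s|^{\beta})$, and since $\alpha>2$ the series $\sum_{\ell}\tfrac{2\ell+1}{4\pi}C_{\ell}(0)$ converges absolutely (with $|P_{\ell}|\leqslant 1$), so termwise rearrangement is legitimate. Writing $A:=|t-s|^{\beta}\in[0,1)$ and setting $\Gamma_{Z}(\eta):=\sum_{\ell\geqslant 0}\tfrac{2\ell+1}{4\pi}C_{\ell}(0)P_{\ell}(\eta)$ for the covariance kernel of the static spherical field $Z$ with angular power spectrum $\{C_{\ell}(0)\}$, a short algebraic manipulation gives the key identity
\begin{equation*}
d_{T}^{2}\bigl((x,t),(y,s)\bigr) \;=\; 2\Gamma_{Z}(1)\,|t-s|^{\beta} \;+\; \bigl(1-|t-s|^{\beta}\bigr)\,d_{Z}^{2}(x,y),
\end{equation*}
where $d_{Z}^{2}(x,y)=2\Gamma_{Z}(1)-2\Gamma_{Z}(\langle x,y\rangle)$ is the canonical distance squared for $Z$. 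Note that $\Gamma_{Z}(1)\in(0,\infty)$ since $\alpha>2$.

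Next I would invoke the equivalence of Lan--Marinucci--Xiao recalled just after \eqref{eqn.rho}: because $\{C_{\ell}(0)\}$ satisfies Condition~(A) with $2<\alpha<4$, there exist $k_{1},k_{2}>0$ and $\varepsilon_{0}\in(0,1)$ such that
\begin{equation*}
k_{1}\,\rho_{\alpha}^{2}\bigl(d_{\mathbb{S}^{2}}(x,y)\bigr) \;\leqslant\; d_{Z}^{2}(x,y) \;\leqslant\; k_{2}\,\rho_{\alpha}^{2}\bigl(d_{\mathbb{S}^{2}}(x,y)\bigr), \qquad d_{\mathbb{S}^{2}}(x,y)<\varepsilon_{0}.
\end{equation*}
Comparing the identity for $d_{T}^{2}$ with the definition $\mu_{\alpha,\beta}^{2}=|t-s|^{\beta}+(1-|t-s|^{\beta})\rho_{\alpha}^{2}(d_{\mathbb{S}^{2}}(x,y))$, both expressions are of the form $\kappa_{1}A+\kappa_{2}(1-A)B$ with $B:=\rho_{\alpha}^{2}(d_{\mathbb{S}^{2}}(x,y))$, where $(\kappa_{1},\kappa_{2})=(1,1)$ for $\mu_{\alpha,\beta}^{2}$ and $\kappa_{1}=2\Gamma_{Z}(1)$, $\kappa_{2}\in[k_{1},k_{2}]$ for $d_{T}^{2}$. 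Bounding the four coefficients simultaneously from above and below yields a single constant $c_{\alpha}$ (one may take $c_{\alpha}=\max\{2\Gamma_{Z}(1),k_{2},1/\min(2\Gamma_{Z}(1),k_{1})\}$) for which the two-sided inequality holds for all $|t-s|<1$ and $d_{\mathbb{S}^{2}}(x,y)<\varepsilon_{0}$.

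The only steps requiring care are the absolute convergence/rearrangement used to derive the key identity (routine from $\alpha>2$) and the verification that $2\Gamma_{Z}(1)<\infty$ (same reason). The substantive geometric content—the equivalence $d_{Z}^{2}\asymp\rho_{\alpha}^{2}$ on the sphere—is imported from the cited result, so the proof reduces to the clean factorization of $d_{T}^{2}$ into a time part plus a scaled spatial part. Thus the ``hard'' part is really just recognizing the right decomposition; everything else is bookkeeping.
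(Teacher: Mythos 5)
Your proof is correct and follows essentially the same route as the paper: expand $d_T^2$ via the spectral representation, use Assumption I to pull out a $|t-s|^{\beta}$ (time) part and a $(1-|t-s|^{\beta})$ (spatial) part, and then import the Lan--Marinucci--Xiao equivalence $d_Z^2 \asymp \rho_\alpha^2$ on the spatial factor. The paper's proof writes the same decomposition termwise in the spherical-harmonic expansion and then cites \cite[Lemma 4]{MXL}; your version simply makes the intermediate identity $d_T^2 = 2\Gamma_Z(1)|t-s|^\beta + (1-|t-s|^\beta) d_Z^2(x,y)$ and the final constant more explicit.
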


\begin{proof}
    By  \eqref{eqn.covariance.function} we have that 
    \begin{align*}
        & d^{2}_{T}( (x,t) , (y,s) ) = \E\left[ T(x,t)^{2} \right] + \E\left[ T(y,s)^{2} \right] - 2\E\left[ T(x,t)T(y,s) \right]
    \\
    & = 2 \sum_{\ell =1}^{\infty}  \frac{2\ell+1}{4\pi} C_\ell(0) - 2 \sum_{\ell =1}^{\infty}  \frac{2\ell+1}{4\pi} C_\ell(t-s) P_\ell(\cos \theta)
    \\
    & = \sum_{\ell =1}^{\infty}  \frac{2\ell+1}{2\pi} (C_{\ell} (0)- C_\ell(t-s) P_\ell(\cos \theta))
    \\
    & = \sum_{\ell =1}^{\infty}  \frac{2\ell+1}{2\pi} \left( C_{\ell} (0)- C_\ell(t-s) \right) +  \sum_{\ell =1}^{\infty}  \frac{2\ell+1}{2\pi}  C_\ell(t-s) \left( 1- P_\ell(\cos \theta) \right)
    \\
    & = | t-s|^{\beta}   \sum_{\ell =1}^{\infty}  \frac{2\ell+1}{2\pi} C_{\ell} (0) + (1-| t-s|^{\beta} )  \sum_{\ell =1}^{\infty}  \frac{2\ell+1}{2\pi} C_{\ell} (0) \left( 1- P_\ell(\cos \theta) \right),
    \end{align*}
    and the result then follows by \cite[Lemma 4]{MXL}.
\end{proof}

\subsection{Main Results}
We want to study the asymptotic behaviour of the maximum of a Gaussian random field  $\{ T(x,t) , \, (x,t) \in \mathbb S^{2} \times \R \}$ in a small time-space regime. Thus, we assume  $(x,t) \in \mathbb S^{2} \times [0,T]$, for some fixed $T>0$.  On the product metric space $\mathbb S^{2} \times [0,T]$ one has the natural $q$-product metrics given by 
\begin{align}
    d_{q} ( (x,t) , (y,s) ) := \left( |t-s|^{q} + d_{\mathbb S^{2}} (x,y)^{q} \right)^{\frac{1}{q}} \label{eqn.dist}
\end{align}
for any $1\leqslant q  \leqslant \infty$. These metrics are all equivalent and induce the same topology. It is easy to see that our results are independent of the choice of $q$, and for this reason we fix $q=2$. Let 
\[
B_{r}(x,t) := \left\{ (y,s) \in \mathbb S^{2} \times [0,T] \, : \,  d_{2} ( (x,t) , (y,s) )< r\right\}.
\]

First, let us state an auxiliary result: the strong non-local determinism for space-time real-valued spherical random fields, whose proof can be found in Appendix \ref{sec:appendixA}.

\begin{prop}\label{thm.non.local.deter} 
    Let $\{ T(x,t) , \, (x, t) \in \mathbb S^{2} \times [0,T] \}$ be a Gaussian random field satisfying Assumption I. Then there exist two positive and finite constants $C$ and $\varepsilon_{0}$ such that for all integers $n\geqslant 1$, and all $(x_{0}, t_{0} ),\ldots ,   ( x_{n}, t_{n} ) \in \mathbb{S}^{2} \times [0,T]$ with $\min_{1\leqslant k \leqslant n} d_{2} \left(  (x_{0}, t_{0}) , (x_{k}, t_{k})\right)\geqslant \varepsilon_{0}$, we have that 
    \begin{equation}\label{eqn.bound.variance.1}
        \text{Var} \left( T(x_{0}, t_{0}) \, | \, T(x_{1}, t_{1} ), \ldots , T(x_{n}, t_{n})   \right) \geqslant 
        C \min_{1\leq k\leq n} \rho_{\alpha}^{2} (d_{\mathbb S^{2}} (x,x_k) )).
    \end{equation}
\end{prop}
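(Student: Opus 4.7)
The plan is to adapt the strong local non-determinism argument of \cite[Proposition 4]{MXL} for the purely spatial field $Z(x)$ to the present time-space setting, by combining spherical harmonic analysis in space with the spectral representation of the stationary time components $\{a_{\ell m}(\cdot)\}$.

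First, I would invoke the standard Gaussian identity
\[
\text{Var}\bigl(T(x_0,t_0)\mid T(x_1,t_1),\ldots,T(x_n,t_n)\bigr) \;=\; \inf_{u_1,\ldots,u_n\in\mathbb{R}} \mathbb{E}\Bigl[\bigl(T(x_0,t_0)-\sum_{k=1}^{n} u_k T(x_k,t_k)\bigr)^2\Bigr],
\]
reducing the claim to a uniform lower bound of the right-hand side over all coefficients $u_k$. Plugging in the Karhunen-Lo\`eve expansion \eqref{eqn.time.dep.random.field} and using independence of the $a_{\ell m}$ across $(\ell,m)$ together with the addition formula \eqref{addition}, the infimand becomes a spectral sum in $\ell$ coupling the temporal covariances $C_\ell(t_i-t_j)$ with the Legendre-polynomial evaluations $P_\ell(\langle x_i,x_j\rangle)$.

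The core of the argument is the test-function method. Set $r:=\min_{1\le k\le n} d_{\mathbb{S}^2}(x_0,x_k)$ and choose a smooth bump $\psi_x$ on $\mathbb{S}^2$ supported in the geodesic ball of radius $r/4$ around $x_0$, together with a smooth bump $\psi_t$ on $\mathbb{R}$ localized on a time-scale chosen so that the spatial and temporal scales match the canonical metric $\mu_{\alpha,\beta}$ of Proposition~\ref{prop.bound.dist}. Form $Y := \int \psi_x(y)\psi_t(s)\,T(y,s)\,dy\,ds$ and apply the Cauchy-Schwarz inequality in the Gaussian Hilbert space:
\[
\mathbb{E}\Bigl[\bigl(T(x_0,t_0)-\sum_{k} u_k T(x_k,t_k)\bigr)^2\Bigr] \;\ge\; \frac{\bigl|\mathbb{E}\bigl[(T(x_0,t_0)-\sum_k u_k T(x_k,t_k))\,Y\bigr]\bigr|^2}{\mathbb{E}[Y^2]}.
\]
Using the spectral representation of each stationary process $a_{\ell m}(\cdot)$, the three covariances $\mathbb{E}[T(x_0,t_0)Y]$, $\mathbb{E}[T(x_k,t_k)Y]$ and $\mathbb{E}[Y^2]$ reduce to integrals over the sphere and the line that can be estimated via integration by parts, using the high-frequency decay $C_\ell(0)\asymp \ell^{-\alpha}$ and the identity $C_\ell(\tau)=C_\ell(0)(1-|\tau|^\beta)$ for $|\tau|<1$. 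Balancing the space and time scales then yields a lower bound of order $r^{\alpha-2}=\rho_\alpha^2(r)$ for the conditional variance, giving the claim in view of Proposition~\ref{prop.bound.dist}.

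The hard part will be obtaining enough cancellation in the cross term $\mathbb{E}[Y\,T(x_k,t_k)]$: since the spherical covariance kernel $\sum_\ell \frac{2\ell+1}{4\pi}C_\ell(\tau)P_\ell(\cdot)$ is positive definite and does not vanish outside the support of $\psi$, the naive triangle-inequality bound is not sharp enough, and one must exploit the oscillation of $P_\ell$ at high frequencies together with the smoothness of $\psi_x$ at scale $r$ to extract the correct power of $r$. This is precisely where the restriction $2+\beta<\alpha<4$ from Assumption~I is needed: $\alpha<4$ gives the non-trivial spatial modulus in the sense of \cite{MXL}, while $\alpha>2+\beta$ ensures convergence of the joint space-time spectral integrals that appear in $\mathbb{E}[Y^2]$.
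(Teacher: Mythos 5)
Your opening move is correct and matches the paper: reduce to the infimum over linear predictors via the Gaussian identity, then substitute the Karhunen--Lo\`eve expansion and the covariance structure \eqref{eqn.covariance.alm}. From there, however, you and the paper diverge, and the divergence points to a real gap in your plan.

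The paper's key step, which you are missing, is an algebraic decomposition of the resulting quadratic form in $\gamma_1,\ldots,\gamma_n$ into a ``completed square'' term
\[
\sum_{\ell}\frac{2\ell+1}{4\pi}C_\ell(0)\Bigl(1-\sum_{j}\gamma_j\tfrac{C_\ell(t_0-t_j)}{C_\ell(0)}P_\ell(d(x_0,x_j))\Bigr)^2
\]
plus a remainder, and Lemma~\ref{lem:POSDEF} shows the remainder is nonnegative (this uses a positive-definiteness fact from \cite{LuMaXi} about $C(x,x)C(x_1,x_2)-C(x,x_1)C(x,x_2)$). Once one is reduced to the square term, the paper applies Cauchy--Schwarz against the \emph{purely spatial} bump function $\delta_\varepsilon$ of \cite{MXL}. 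The crucial point is that Assumption~I gives the separable form $C_\ell(\tau)/C_\ell(0)=1-|\tau|^\beta$, independent of $\ell$, so the factor $(1-|t_0-t_j|^\beta)$ pulls out of the spectral sum, and the cross term becomes $\sum_j\gamma_j(1-|t_0-t_j|^\beta)\delta_\varepsilon(\theta_j,\varphi_j)$, which vanishes \emph{exactly} because $\delta_\varepsilon$ is supported inside the ball of radius $\varepsilon$ while each $\theta_j>\varepsilon$. No oscillation estimate is needed.

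Your proposal instead applies Cauchy--Schwarz directly to the original predictor difference against a joint space-time test function $Y=\int\psi_x\psi_t\,T$. Without the intermediate algebraic reduction, the cross-covariances $\mathbb{E}[Y\,T(x_k,t_k)]$ do \emph{not} vanish --- you acknowledge this as ``the hard part'' and propose to extract it from ``oscillation of $P_\ell$,'' but this is exactly the obstacle the paper's decomposition and the $\ell$-independence of $C_\ell(\tau)/C_\ell(0)$ are designed to sidestep, and you offer no mechanism for resolving it. As written your argument does not close. A smaller but telling inaccuracy: you claim $\alpha>2+\beta$ is needed here to control joint space-time spectral integrals, but the paper is explicit that this restriction is used in Proposition~\ref{thm.upper.bound}, not in the strong local non-determinism; the present Proposition only needs $2<\alpha<4$ to invoke the bump-function estimates of \cite{MXL}. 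I recommend you look for the completed-square decomposition and observe that the exact temporal factorization $C_\ell(\tau)=C_\ell(0)(1-|\tau|^\beta)$ is what makes the MXL-style spatial bump argument transfer to the time-dependent setting.
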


Let us introduce the following  notation, and state our main results.

\begin{notation}\label{not.max}
    For any $(x,t)\in \mathbb S^{2} \times [0,T]$, and $r>0$ let us set 
    \begin{align*}
       M_{r} (x,t) := \max_{(y,s)\in B_{r} (x,t)} \vert T(y,s) - T(x,t) \vert ,
    \end{align*}
    where $B_{r} (x,t)$ is defined above. Moreover, for any $r>0$ and $\varepsilon>0$  we define the following functions
    \begin{align}
        &\phi (r) = \phi_{\alpha , \beta} (r) := \left( \frac{\log \vert \log r \vert}{r^{3}}\right)^{p}, \label{eqn.phi}
        \\
        & \psi (r,\varepsilon ) = \psi_{\alpha, \beta} (r,\varepsilon) :=  r^{3} \varepsilon^{-\frac{1}{p}}, \label{eqn.psi}
    \end{align}
    where 
    \begin{equation}\label{eqn.p.rate}
        p=p (\alpha , \beta ):= \left( \frac{2}{\beta} + \frac{4}{\alpha -2} \right)^{-1}.
    \end{equation}
\end{notation}

\begin{thm}\label{thm.Chung.lil}
     Let $\{ T(x,t) , \, (x, t) \in \mathbb S^{2} \times [0,T] \}$ be a time-spherical Gaussian random field satisfying Assumption I. Then there exists a finite positive constant $\kappa_{\alpha , \beta}$  such that, for all $(x,t)\in \mathbb S^{2} \times  [0,T]$ 
    \begin{equation}\label{eqn.Chung.lil.sphere}
        \liminf_{r\rightarrow 0} \phi(r) M_{r} (x,t) = \kappa_{\alpha , \beta}, \; \;  \text{a.s.}
    \end{equation}
     where $\phi = \phi_{\alpha , \beta}$ is given by \eqref{eqn.phi}.
\end{thm}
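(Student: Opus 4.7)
The plan is to follow the strategy outlined in the introduction: use the small-ball asymptotics of Theorem \ref{thm.small.deviations} as the central input and identify the constant $\kappa_{\alpha,\beta}$ directly in terms of the constant $c$ appearing in \eqref{eqn.sd.general}. Writing Theorem \ref{thm.small.deviations} informally as $\log\Prob(M_r(x,t) < \varepsilon) \sim -c\,\psi(r,\varepsilon)$, one substitutes $\varepsilon = \kappa/\phi(r)$ and uses \eqref{eqn.phi}--\eqref{eqn.psi} to compute
\[
\psi\!\left(r,\tfrac{\kappa}{\phi(r)}\right) \;=\; r^{3}\!\left(\tfrac{\kappa}{\phi(r)}\right)^{-1/p} \;=\; \kappa^{-1/p}\log|\log r|,
\]
so that $\Prob(M_r(x,t)<\kappa/\phi(r))\asymp |\log r|^{-c\kappa^{-1/p}}$. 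Along a geometric subsequence $r_n=e^{-n}$ this behaves like $n^{-c\kappa^{-1/p}}$, and the dichotomy of Borel--Cantelli identifies $\kappa_{\alpha,\beta}=c^{p}$ as the unique critical value.

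For the upper bound $\liminf_{r\to0}\phi(r)M_r(x,t)\leq\kappa_{\alpha,\beta}$ almost surely, I would fix $\kappa'>\kappa_{\alpha,\beta}$ and try to show the events $E_n=\{M_{r_n}(x,t)<\kappa'/\phi(r_n)\}$ occur infinitely often. Since these events are nested and highly correlated, a second Borel--Cantelli cannot be applied directly. The standard remedy is to work on disjoint annuli $A_n=B_{r_n}(x,t)\setminus B_{r_{n+1}}(x,t)$ with $r_n$ decreasing fast enough that the field contributions from different annuli are nearly independent; Proposition \ref{thm.non.local.deter} supplies the quantitative lower bound on conditional variances needed to substitute for true independence, and Lemma \ref{lemma.talagrand} is then used to estimate the conditional small-ball probabilities on each annulus. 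This conditional-independence step is the main obstacle: one must balance a sufficiently rapid decay of $r_n$ (to decouple annuli) against a sufficiently slow decay (to keep the sum $\sum_n \Prob(E_n)$ divergent), and the restriction $2+\beta<\alpha<4$ in Assumption~I enters precisely here.

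The lower bound $\liminf_{r\to0}\phi(r)M_r(x,t)\geq\kappa_{\alpha,\beta}$ almost surely is comparatively routine: for $\kappa'<\kappa_{\alpha,\beta}$, Theorem \ref{thm.small.deviations} makes $\sum_n\Prob(M_{r_n}(x,t)<\kappa'/\phi(r_n))$ convergent, so the first Borel--Cantelli lemma gives $M_{r_n}(x,t)\geq \kappa'/\phi(r_n)$ eventually a.s., and one passes from the subsequence to all small $r$ using monotonicity of $r\mapsto M_r(x,t)$ together with the slow variation of $\phi$ along geometric scales. Constancy of the limit and its independence from the basepoint $(x,t)$ come for free from the two-sided argument: the liminf equals a deterministic constant almost surely, and stationarity in time and isotropy in space force the distribution of $M_r(x,t)$, hence this deterministic value, to be the same at every $(x,t)\in\mathbb{S}^{2}\times[0,T]$.
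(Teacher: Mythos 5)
Your plan has a circularity at its core: you take Theorem \ref{thm.small.deviations} as the central input, but in this paper that theorem is not available before Chung's LIL — its statement already refers to $\kappa_{\alpha,\beta}$, and its proof (Section 5) uses the a.s. bounds $0<\kappa_{\alpha,\beta}<\infty$ from Propositions \ref{thm.lower.bound} and \ref{thm.upper.bound} to show $c_{+}\leqslant \kappa_{\alpha,\beta}^{1/p}\leqslant c_{-}$, which is precisely how both the existence of the small-ball limit and the constancy of $\kappa_{\alpha,\beta}$ are obtained, simultaneously. What is available beforehand is only the pair of one-sided bounds of Propositions \ref{thm.small.ball.estimate.upper.bound} and \ref{thm.small.ball.estimate.lower.bound}, with two different constants $A_{1}\leqslant A_{2}$; these give $A_{1}^{p}\leqslant \liminf_{r\to 0}\phi(r)M_{r}(x,t)\leqslant A_{2}^{p}$ a.s.\ but neither the identification $\kappa_{\alpha,\beta}=c^{p}$ nor the a.s.\ constancy, so your claim that constancy ``comes for free from the two-sided argument'' does not hold unless you first prove, independently, that the limit $\lim_{\varepsilon\to0}-\varepsilon^{1/p}\log\Prob(M_{r}<\varepsilon)$ exists — and you give no argument for that.

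The second gap is in the upper-bound half (events occurring infinitely often), which you yourself flag as the main obstacle but do not resolve. Decoupling over disjoint spatial annuli does not work here: the field has no spatial decay of correlations that would make contributions from different annuli ``nearly independent,'' and Proposition \ref{thm.non.local.deter} is a pointwise conditional-variance lower bound (used in the paper to prove the small-ball \emph{upper} bound via a conditioning chain and Anderson's inequality); it does not yield lower bounds on conditional small-ball probabilities for the maximum over a region given the field on other regions, which is what your scheme needs. The paper decouples in frequency instead: with $d_{n}=\lfloor e^{n^{2}}\rfloor$ and $r_{n}=e^{-n}d_{n}^{-1}$ it writes $T=\Xi_{n}+\Theta_{n}$, where $\Xi_{n}$ collects the multipoles $d_{n}<\ell\leqslant d_{n+1}$; the blocks $\Xi_{n}$ are exactly independent across $n$ by \eqref{eqn.covariance.alm}, Anderson's inequality transfers the lower bound of Proposition \ref{thm.small.ball.estimate.lower.bound} from $T$ to $\Xi_{n}$ so that the second Borel--Cantelli lemma applies, and the remainder $\Theta_{n}$ is shown to be negligible at scale $r_{n}$ via the Talagrand concentration bound \eqref{lemma.talagrand.diameter} together with entropy and diameter estimates — this is where the restriction $2+\beta<\alpha<4$ actually enters, not in any annuli-decoupling balance. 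Without either this frequency-block construction or a genuinely new decoupling argument, your upper bound, and hence the identification and constancy of $\kappa_{\alpha,\beta}$, remains unproven.
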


As an application of Theorem \ref{thm.Chung.lil} we solve the small ball problem for the random field $T(x,t)$.

\begin{thm}\label{thm.small.deviations}
    Let $\{ T(x,t) , \, (x, t) \in \mathbb S^{2} \times [0,T] \}$ be a Gaussian random field satisfying Assumption I, and $\kappa_{\alpha , \beta}$ be given by \eqref{eqn.Chung.lil.sphere}. Then, for every $r>0$
    \begin{equation}\label{eqn.small.deviations}
        \lim_{\varepsilon \rightarrow 0} - \varepsilon^{\frac{1}{p}} \log \mathbb{P} \left( M_{r} (x,t) < \varepsilon \right) = r^{3} \kappa_{\alpha ,\beta}^{\frac{1}{p}},
    \end{equation}
    where $p = p (\alpha , \beta )$ is given by \eqref{eqn.p.rate}.
\end{thm}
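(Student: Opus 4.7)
The plan is to derive \eqref{eqn.small.deviations} by combining two-sided a priori bounds on $F(r,\varepsilon):=-\log\Prob(M_r(x,t)<\varepsilon)$ with the exact Chung constant $\kappa_{\alpha,\beta}$ from Theorem~\ref{thm.Chung.lil}. By stationarity in time and isotropy in space, $F$ is independent of $(x,t)$.

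As a preliminary step I would produce bounds $c_1 r^3\varepsilon^{-1/p}\leq F(r,\varepsilon)\leq c_2 r^3\varepsilon^{-1/p}$ for small $\varepsilon$. The upper bound on $F$ comes from Lemma~\ref{lemma.talagrand} together with Proposition~\ref{prop.bound.dist}: a direct volume count yields $N(B_r(x,t),d_T,u)\asymp r^3 u^{-1/p}$, since a $\mu_{\alpha,\beta}$-ball of radius $u$ has time-length $\sim u^{2/\beta}$ and space-diameter $\sim u^{2/(\alpha-2)}$. The matching lower bound on $F$ uses the strong non-local determinism of Proposition~\ref{thm.non.local.deter}: at each point of a maximal $\varepsilon$-net in $B_r$ of size $\sim r^3\varepsilon^{-1/p}$, conditioning on the earlier net points yields a Gaussian with variance at least $c\varepsilon^2$, and the product of the resulting one-dimensional Gaussian small-ball factors gives the claim.

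To extract the exact constant, fix $\delta>0$, a geometric sequence $\rho_n\downarrow 0$, and base points $(x_n,t_n)$ with balls $B_{\rho_n}(x_n,t_n)$ lying in asymptotically disjoint regions of $\mathbb{S}^2\times[0,T]$. Set $\varepsilon_n^{\pm}:=(\kappa_{\alpha,\beta}\pm\delta)/\phi(\rho_n)$ and $A_n^{\pm}:=\{M_{\rho_n}(x_n,t_n)<\varepsilon_n^{\pm}\}$. Theorem~\ref{thm.Chung.lil} applied at each $(x_n,t_n)$ yields $\Prob(A_n^-\text{ i.o.})=0$ and $\Prob(A_n^+\text{ i.o.})=1$. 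Combined with asymptotic independence of the $A_n^\pm$, the second Borel-Cantelli lemma and its contrapositive force $\sum_n\Prob(A_n^-)<\infty$ and $\sum_n\Prob(A_n^+)=\infty$. Using the identity $\phi(\rho_n)^{1/p}=\log|\log\rho_n|/\rho_n^3$, these rearrange to bounds on $(\varepsilon_n^\pm)^{1/p}F(\rho_n,\varepsilon_n^\pm)$ that differ by $O(\delta)$ from $\kappa_{\alpha,\beta}^{1/p}\rho_n^3$. Monotonicity of $F(r,\cdot)$ and a sandwich between consecutive $\varepsilon_n$ pass the bounds to arbitrary $\varepsilon\to 0$ at fixed $r$, and letting $\delta\downarrow 0$ yields \eqref{eqn.small.deviations}.

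The main obstacle is the asymptotic independence of the events $A_n^\pm$: time stationarity alone does not provide an independent-increment structure, and the spatial covariance is determined by the angular power spectrum. I would invoke Proposition~\ref{thm.non.local.deter} to bound below the conditional variance of $T$ on the $n$-th ball given $T$ on the other balls, and use a Kochen--Stone second-moment inequality to translate the almost-sure tail statements from Chung's LIL into the Borel-Cantelli sum-convergence and sum-divergence conclusions above. A secondary technical point is that Theorem~\ref{thm.Chung.lil} is a pointwise almost-sure statement, so some care is needed in applying it simultaneously at the countable family of base points $(x_n,t_n)$ on a single almost-sure event.
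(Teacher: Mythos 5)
Your preliminary step (the two-sided bounds $c_1 r^3\varepsilon^{-1/p}\leqslant -\log\Prob(M_r<\varepsilon)\leqslant c_2 r^3\varepsilon^{-1/p}$ via Lemma \ref{lemma.talagrand}, Proposition \ref{prop.bound.dist}, and the conditioning argument based on Proposition \ref{thm.non.local.deter}) matches Propositions \ref{thm.small.ball.estimate.upper.bound} and \ref{thm.small.ball.estimate.lower.bound}. The second stage, however, has a genuine gap. The claim that ``Theorem \ref{thm.Chung.lil} applied at each $(x_n,t_n)$ yields $\Prob(A_n^-\text{ i.o.})=0$ and $\Prob(A_n^+\text{ i.o.})=1$'' is a non sequitur: the LIL constrains $\liminf_{r\to0}\phi(r)M_r(x_n,t_n)$ over \emph{all} radii at one \emph{fixed} point, whereas each $A_n^\pm$ samples a single radius $\rho_n$ at a different base point; the almost sure statement at $(x_n,t_n)$ says nothing about how often the events $A_n^\pm$ occur across $n$. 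Worse, if the balls really were (asymptotically) independent, as your Borel--Cantelli/Kochen--Stone step requires, then $\Prob(A_n^\pm\text{ i.o.})$ is itself determined by $\sum_n\Prob(A_n^\pm)$ --- exactly the quantity you are trying to extract --- so the argument is circular. If instead you keep a single base point, where the LIL does give $\Prob(\text{i.o.})=0$ for the lower events, the concentric balls are strongly positively dependent and the second-moment control $\sum_{m,n}\Prob(A_m\cap A_n)\lesssim\left(\sum_n\Prob(A_n)\right)^2$ needed for Kochen--Stone is not available; Proposition \ref{thm.non.local.deter} bounds conditional variances from \emph{below}, which does not decorrelate such events. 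There is also a scaling mismatch at the end: your bounds come along pairs $(\rho_n,\varepsilon_n^\pm)$ with $\rho_n\to0$, while \eqref{eqn.small.deviations} is at fixed $r$ with $\varepsilon\to0$; monotonicity in $\varepsilon$ alone cannot trade a shrinking radius for a fixed one without already knowing the $r^3$ dependence of the rate.

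The paper runs the inference in the opposite, unproblematic direction (probability estimates $\Rightarrow$ almost sure statements). It sets $c_\pm:=\limsup/\liminf\left(-\psi(r,\varepsilon)^{-1}\log\Prob(M_r(x,t)<\varepsilon)\right)$ at the fixed $r$ and proves $c_+\leqslant\kappa_{\alpha,\beta}^{1/p}\leqslant c_-$ a.s.; since $c_-\leqslant c_+$, all three coincide, and the constancy of $\kappa_{\alpha,\beta}$ drops out as a byproduct rather than being assumed. The first inequality uses the first Borel--Cantelli lemma on shrinking balls at the same point; the second uses the second Borel--Cantelli lemma, with the required independence supplied not by spatial separation but by the decomposition $T=\Xi_n+\Theta_n$ into independent multipole blocks $\Xi_n$ (frequencies $d_n<\ell\leqslant d_{n+1}$), together with the estimate \eqref{lemma.talagrand.diameter} showing $\phi(r_n)\max_{B_{r_n}(x,t)}\vert\Theta_n(y,s)-\Theta_n(x,t)\vert\to0$. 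This frequency-block device is what your outline lacks, and without it (or some substitute that legitimizes the converse Borel--Cantelli step) the proposed route from Chung's LIL back to the small-ball limit does not go through.
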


We remark that the constant $\kappa_{\alpha ,\beta}$ only depends on $\alpha$ and $\beta$ and not on the point $(x,t)\in \mathbb S^{2} \times [0,T]$. Chung's LIL \eqref{eqn.Chung.lil.sphere} describes sample paths properties of the random field $T(x,t)$ which do not depend on the fixed point $(x,t)\in \mathbb S^{2} \times [0,T]$ due to isotropy and stationarity of the field.

\subsection{Isotropic Gaussian spherical random fields (non-time-dependent)}
Chung's LIL for Gaussian isotropic spherical random fields  (non-time-dependent) can be derived from \cite{LX10} and exploiting the strong non-local determinism in \cite{LuMaXi}. However, it can also be seen as a special case of time-dependent spherical Gaussian fields. Moreover, it is possible here to solve also the small ball probabilities for Gaussian random fields as a special case of Theorem \ref{thm.small.deviations}.

More precisely, let us consider the field in \ref{field1}. For any $x\in \mathbb S^{2}$, and $r>0$ let 
    \begin{align*}
       M_{r} (x) := \max_{y\in B_{r} (x)} \vert Z(y) - Z(x) \vert ,
    \end{align*}
    where $B_{r} (x)$ denotes the geodesic ball in $\mathbb S^{2}$ centered in $x\in \mathbb S^{2}$; and for any $r>0$ and $2<\alpha <4$ 
\begin{align*}
      & \phi_{\alpha} (r) :=    \left( \frac{\log \vert \log r \vert}{r^{2}} \right)^{\frac{\alpha-2}{4}} ,  & \rho_{\alpha} (t) : =t^{\frac{\alpha -2}{2}}.
\end{align*}
    It was shown in \cite{MXL} that the function $\rho_{\alpha}$ is equivalent to the canonical metric $d_{T}$ on $\mathbb S^{2}$ induced by the random field $\{Z(x) , x\in \mathbb S^{2} \}$.
  \begin{rmk}
      Note that the induced distance $  \mu^{2}_{\alpha, \beta} ( (x,t) , (y,s) )$ in (\ref{eqn.induced.dist.mu}) 
        reduces to $\rho_{\alpha}^{2} (d_{\mathbb S^{2}} (x,y) ) $   if we set $t=s$.
  \end{rmk}

    Chung's LIL  and  small ball probabilities  are given by the following two results respectively.

\begin{thm}\label{thm.Chung.lil2}
    Let $\{ Z(x) , \, x\in \mathbb{S}^{2} \}$  be an isotropic Gaussian field satisfying 
    Condition (A) for some $2< \alpha <4$.
    Then there exists a finite positive constant $p_\alpha$  such that 
    \begin{equation*}
        \liminf_{r\rightarrow 0} \phi_{\alpha} (r) M_{r} (x) = p_{\alpha}, \; \text{a.s.}
    \end{equation*}
    for all $x\in \mathbb S^{2}$.
\end{thm}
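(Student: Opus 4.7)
The plan is to mirror the argument used for the time-dependent Theorem \ref{thm.Chung.lil} in the simpler, purely spatial setting. The key observation is that Condition (A), together with the equivalence $d_Z(x,y) \asymp \rho_\alpha(d_{\mathbb{S}^2}(x,y)) = d_{\mathbb{S}^2}(x,y)^{(\alpha-2)/2}$ established in \cite{MXL}, plays exactly the role that Proposition \ref{prop.bound.dist} plays in the time-dependent case, with the metric $\mu_{\alpha,\beta}$ collapsing to $\rho_\alpha \circ d_{\mathbb{S}^2}$ when the time coordinate is dropped. The strong non-local determinism \eqref{eqn.bound.variance.1} for the purely spatial field is already available from \cite{MXL}, so all the ingredients are in place.

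First, I would establish the small ball asymptotics
\begin{equation*}
    \lim_{\varepsilon \to 0} -\varepsilon^{4/(\alpha-2)} \log \mathbb{P}(M_r(x) < \varepsilon) = r^{2}\, p_\alpha^{\,4/(\alpha-2)},
\end{equation*}
via Lemma \ref{lemma.talagrand}. Since $d_Z$-balls of radius $\varepsilon$ correspond to geodesic balls of radius $\asymp \varepsilon^{2/(\alpha-2)}$, one has $N(B_r(x), d_Z, \varepsilon) \asymp r^{2}/\varepsilon^{4/(\alpha-2)}$, so Talagrand's lower bound gives $\mathbb{P}(M_r(x) < \varepsilon) \geq \exp(-c\, r^{2}/\varepsilon^{4/(\alpha-2)})$. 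For the matching upper bound, I would apply the strong non-local determinism on a maximal $\varepsilon^{2/(\alpha-2)}$-separated subset of $B_r(x)$ of cardinality $\asymp r^{2}/\varepsilon^{4/(\alpha-2)}$: the conditional variance bound forces the joint event $\{M_r(x) < \varepsilon\}$ to have probability at most $\exp(-c'\, r^{2}/\varepsilon^{4/(\alpha-2)})$. The existence of a common constant $p_\alpha$ on both sides follows from a covering/subadditivity argument analogous to the one used in Section 5.

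Finally I would transfer the small ball estimate to Chung's LIL. Along a geometric subsequence $r_n = q^n$, the small ball bound gives
\begin{equation*}
    \mathbb{P}\bigl(\phi_\alpha(r_n)\, M_{r_n}(x) < c\bigr) \asymp |\log r_n|^{-c_1 (c/p_\alpha)^{-4/(\alpha-2)}},
\end{equation*}
which is summable for $c$ small enough and nonsummable for $c$ large, so Borel--Cantelli (together with the approximate independence on well-separated subregions provided by \eqref{eqn.bound.variance.1}) pins $\liminf_{r\to 0} \phi_\alpha(r) M_r(x)$ to the single constant $p_\alpha$. The almost-sure constancy of the $\liminf$ as a function of $x\in\mathbb{S}^2$ follows from the isotropy of $Z$, exactly as in the time-dependent proof. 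The main obstacle, as in Theorem \ref{thm.Chung.lil}, is the matching of constants in the small ball problem to ensure that $p_\alpha$ is well defined and independent of the base point; once this is settled, the remaining steps are routine specializations of the arguments in Sections 4 and 5.
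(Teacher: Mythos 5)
Your high-level plan — mirror the time-dependent argument in the spatial setting, using Condition (A), the equivalence $d_Z \asymp \rho_\alpha \circ d_{\mathbb{S}^2}$, and the strong non-local determinism from \cite{MXL} — is the right specialization, and your covering-number count $N(B_r(x),d_Z,\varepsilon)\asymp r^2/\varepsilon^{4/(\alpha-2)}$ and the resulting rate exponent $(\alpha-2)/4$ are correct. However, the logical order you propose reverses the order of the paper's proof, and that reversal is where the gap sits. You want to \emph{first} establish the exact small-ball asymptotic $\lim_{\varepsilon\to 0}-\varepsilon^{4/(\alpha-2)}\log\mathbb{P}(M_r(x)<\varepsilon)=r^2p_\alpha^{4/(\alpha-2)}$ and then deduce the LIL. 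But the existence of that limit with a sharp constant is itself the hard part, and your claim that ``the existence of a common constant $p_\alpha$ on both sides follows from a covering/subadditivity argument analogous to the one used in Section 5'' misreads Section 5: the paper never runs a subadditivity argument, and no scaling or translation structure is available on the compact sphere to make a Fekete-type argument work. What the paper actually does (Sections 3--5) is the opposite: get two-sided small-ball bounds with mismatched constants $A_1,A_2$; use those to prove the LIL with an a.s.\ bounded \emph{random} quantity $\kappa\in[A_1^p,A_2^p]$; and only then squeeze $c_+\leq\kappa^{1/p}\leq c_-$ a.s., which forces $c_+=c_-$ and simultaneously shows $\kappa$ is deterministic. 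That order is essential and cannot be reversed without independently proving the small-ball limit exists.

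Second, your reliance on ``approximate independence on well-separated subregions provided by \eqref{eqn.bound.variance.1}'' to run the divergence side of Borel--Cantelli is not how the paper obtains the LIL upper bound, and would be hard to make rigorous. Strong non-local determinism bounds conditional variance from below and is used for the small-ball \emph{upper} bound (Proposition \ref{thm.small.ball.estimate.upper.bound}); it does not produce independence of events. For the converse Borel--Cantelli the paper's Proposition \ref{thm.upper.bound} uses the spectral decomposition $T=\Xi_n+\Theta_n$ with $\Xi_n$ a sum over the frequency shell $d_n<\ell\leq d_{n+1}$; the $\Xi_n$ are genuinely independent across $n$ because the $a_{\ell m}$ are independent Gaussians, and the tail $\Theta_n$ is controlled by the metric-entropy bound \eqref{lemma.talagrand.diameter}. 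The same decomposition, now with $a_{\ell m}$ scalar Gaussians, works verbatim for the static field $Z$ and is the ingredient your sketch is missing.
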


\begin{thm}\label{thm.small.deviations2}
    Let $\{Z(x) , x \in \mathbb{S}^{2} \}$ be an isotropic Gaussian field satisfying 
    Condition (A) for some $2<\alpha<4$
    and $p_{\alpha}$ be given by Theorem \ref{thm.Chung.lil2}. Then, for every $r>0$
    \begin{equation*}\label{eqn.small.deviations2}
        \lim_{\varepsilon \rightarrow 0} - \varepsilon^{\frac{4}{\alpha-2}} \log \mathbb{P} \left( M_{r} (x) < \varepsilon \right) = r^{2} p_{\alpha}^{\frac{4}{\alpha-2}}.
    \end{equation*}
\end{thm}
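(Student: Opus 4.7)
The plan is to mirror the proof of Theorem \ref{thm.small.deviations} in the non-time-dependent setting: collapsing the time variable reduces the induced distance $\mu_{\alpha,\beta}$ in \eqref{eqn.induced.dist.mu} to $\rho_{\alpha}(d_{\mathbb{S}^2})$, and the exponent $1/p = 2/\beta + 4/(\alpha-2)$ simplifies to $4/(\alpha-2)$. I would carry out the same two-step argument (rough bound via Talagrand, sharp constant via Chung's LIL), with the spatial strong local nondeterminism \eqref{eqn.bound.variance.1} from \cite{MXL} and Theorem \ref{thm.Chung.lil2} replacing their time-dependent counterparts.

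First, by isotropy of $Z$, $\mathbb{P}(M_r(x) < \varepsilon)$ is independent of the base point $x \in \mathbb{S}^2$, and the canonical metric $d_Z$ is equivalent to $\rho_{\alpha} \circ d_{\mathbb{S}^2}$ (see \cite{MXL}), so the covering numbers satisfy $N(B_r(x), d_Z, \varepsilon) \asymp r^2 \varepsilon^{-4/(\alpha-2)}$. Plugging $\Psi(\varepsilon) = c\, r^2 \varepsilon^{-4/(\alpha-2)}$ into Lemma \ref{lemma.talagrand} yields $\mathbb{P}(M_r(x) < \varepsilon) \geq \exp\bigl(-k r^2 \varepsilon^{-4/(\alpha-2)}\bigr)$, which fixes the correct order of magnitude and, in particular, shows finiteness and positivity of the limit in \eqref{eqn.small.deviations2}.

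Next, to pin down the sharp constant $r^2 p_{\alpha}^{4/(\alpha-2)}$, I would combine Theorem \ref{thm.Chung.lil2} with a Borel--Cantelli argument. The defining identity $r^2 \phi_{\alpha}(r)^{4/(\alpha-2)} = \log|\log r|$ is the bridge: along a geometric sequence $r_n = q^n$ with $\varepsilon_n = c/\phi_{\alpha}(r_n)$, the conjectured asymptotic gives $\mathbb{P}(M_{r_n}(x) < \varepsilon_n) \asymp |\log r_n|^{-(p_{\alpha}/c)^{4/(\alpha-2)}}$, and the Borel--Cantelli dichotomy at $c = p_{\alpha}$ forces the constant in both directions. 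For the inequality $\limsup \leq r^2 p_\alpha^{4/(\alpha-2)}$ on $-\varepsilon^{4/(\alpha-2)}\log\mathbb{P}$, I would use the convergent side of Borel--Cantelli directly, since a strict violation would contradict $\liminf_{r\to 0}\phi_{\alpha}(r) M_r(x) = p_{\alpha}$ a.s. For the matching $\liminf \geq r^2 p_\alpha^{4/(\alpha-2)}$, I would partition $B_r(x)$ into roughly $(r/r_n)^2$ well-separated geodesic sub-balls, use isotropy to recognize each sub-ball event as a copy of the base event, and decouple them via \eqref{eqn.bound.variance.1}, thereby applying the divergent side of Borel--Cantelli along approximately independent trials.

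The main obstacle is the quantitative decoupling step: one must multiply near-copies of the base small-ball probability over $(r/r_n)^2$ sub-balls with errors that vanish in the $\varepsilon \to 0$ limit. This is exactly where strong local nondeterminism \eqref{eqn.bound.variance.1} is indispensable, and it plays precisely the role that Proposition \ref{thm.non.local.deter} plays in the proof of Theorem \ref{thm.small.deviations}. Once this conditional-Gaussian decoupling is set up, the sharp matching constants in \eqref{eqn.small.deviations2} are forced by the normalization of $\phi_{\alpha}$ and Theorem \ref{thm.Chung.lil2}.
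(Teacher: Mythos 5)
Your overall strategy — specialize the time-dependent argument by collapsing the time variable, so $\mu_{\alpha,\beta}$ reduces to $\rho_\alpha\circ d_{\mathbb{S}^2}$ and the exponent $1/p$ reduces to $4/(\alpha-2)$ — is exactly what the paper intends by calling this a ``special case of Theorem \ref{thm.small.deviations}.'' Your first step (Talagrand covering-number bound giving $\mathbb{P}(M_r(x)<\varepsilon)\ge\exp(-kr^2\varepsilon^{-4/(\alpha-2)})$) and your $c_+\le p_\alpha^{4/(\alpha-2)}$ direction (convergent Borel--Cantelli against Theorem \ref{thm.Chung.lil2}) both match the paper's scheme.

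There is, however, a genuine gap in your proposed argument for $c_-\ge p_\alpha^{4/(\alpha-2)}$. You suggest partitioning $B_r(x)$ into $\asymp(r/r_n)^2$ well-separated geodesic sub-balls and ``decoupling them via \eqref{eqn.bound.variance.1}'' so that the divergent side of Borel--Cantelli can be applied to approximately independent spatial copies of the base event. This is not how the proof works, and the mechanism as described would fail: strong local nondeterminism provides only a \emph{lower bound on conditional variances}, which is precisely what is needed for the conditioning/Anderson-inequality chain in the small-ball \emph{upper} bound (the analogue of Proposition \ref{thm.small.ball.estimate.upper.bound}). It does not make the sub-ball maxima $M_{r_n}(x_1),\dots,M_{r_n}(x_m)$ independent, nor even approximately independent in a way that would allow the second Borel--Cantelli lemma to be invoked. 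The paper instead obtains genuine independence by decomposing the field in \emph{frequency space}: $Z=\Xi_n+\Theta_n$ with $\Xi_n$ supported on disjoint multipole bands $\{d_n+1,\dots,d_{n+1}\}$, so that the $\Xi_n$ are independent by the independence of the $a_{\ell m}$ across $\ell$, while $\Theta_n$ is shown negligible via Talagrand's tail bound \eqref{lemma.talagrand.diameter}. This is the decomposition you should mirror (see Proposition \ref{thm.upper.bound} and the closing argument of the proof of Theorem \ref{thm.small.deviations}); replacing it with a spatial partition plus SLND would not produce the independent events that the divergent Borel--Cantelli requires.
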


\begin{rmk}
    As we also mentioned for the time-dependent spherical Gaussian field, a key tool to prove Theorem \ref{thm.Chung.lil2} (and then Theorem \ref{thm.small.deviations2}) is the strong non-local determinism given in (\ref{eqn.bound.variance.1}) and proved in \cite{MXL}. This result holds if $2<\alpha<4$ and it is conjectured to hold even for $\alpha=4$. If we assume its validity for $\alpha=4$, under the assumption that the random field $\{ Z (x) , \, x\in \mathbb{S}^{2} \}$ satisfies Condition (A) with $\alpha=4$ and \eqref{eqn.bound.variance.1} holds with $\alpha =4$, 
  Chung's LIL  and  small probabilities  can be proven even in this case, with the functions

    \begin{align*}
      &\phi_{4} (r) := \sqrt{\frac{\log \vert \log r \vert}{r^{2} \vert \log r \vert}} ,  &  \rho_{4} (t) : = t \sqrt{\vert \ln t \vert } .
\end{align*}
However, we leave the proof of the strong non-local determinism for spherical random field for $\alpha=4$ as a future research.
\end{rmk}

\section{Preliminary results}
We first prove a weaker version of Theorem \ref{thm.small.deviations}, that is, we show that
\begin{align*}
    & \liminf_{\varepsilon \rightarrow 0} - \varepsilon^{\frac{1}{p}} \log \mathbb{P} \left(M_{r} (x,t)  < \varepsilon \right) >0,
    \\
    & 
    \limsup_{\varepsilon \rightarrow 0} - \varepsilon^{\frac{1}{p}} \log \mathbb{P} \left( M_{r} (x,t)  < \varepsilon \right) < \infty.
\end{align*}
    This is the content of Proposition \ref{thm.small.ball.estimate.upper.bound} and  Proposition \ref{thm.small.ball.estimate.lower.bound}  below.

\begin{prop}\label{thm.small.ball.estimate.upper.bound}
    Let $\{ T(x,t) , \, (x, t) \in \mathbb S^{2} \times [0,T] \}$ be a time-spherical Gaussian random field satisfying Assumption I. Then there exists a positive finite constant $A_{1} = A_{1}( \alpha , \beta)$ only depending on $\alpha$ and $\beta$  such that for all $\varepsilon >0$ and $r>\varepsilon^{\frac{1}{3p}}$ we have that
    \begin{align}\label{eqn.small.ball.upper.bound}
    \mathbb{P} \left(M_{r} (x,t) < \varepsilon \right) \leqslant \exp \left( -A_{1} \psi(r, \varepsilon) \right)    ,
    \end{align}
    where $\psi= \psi_{\alpha, \beta}$ is given by \eqref{eqn.psi}.
\end{prop}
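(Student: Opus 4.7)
\medskip

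\noindent\textbf{Proof proposal.} The strategy is the classical Monrad--Pitt/Talagrand witness construction: I will exhibit a set of $N \asymp \psi(r,\varepsilon)$ points in $B_r(x,t)$ whose pairwise separation in the canonical pseudometric $d_T$ is of order $\varepsilon$, and then invoke the strong local nondeterminism from Proposition \ref{thm.non.local.deter} to decouple them, so that the event $\{M_r(x,t)<\varepsilon\}$ forces $N$ nearly-independent Gaussian increments to each land in an interval of length $2\varepsilon$.

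First I would build the grid. Recalling that for nearby points $\mu^2_{\alpha,\beta}((x,t),(y,s))\asymp |t-s|^{\beta}+d_{\mathbb S^{2}}(x,y)^{\alpha-2}$, I choose a time-spacing $\delta_t:=\eta\,\varepsilon^{2/\beta}$ and a spatial spacing $\delta_s:=\eta\,\varepsilon^{2/(\alpha-2)}$ for some small fixed $\eta>0$. A regular grid in the time interval $[t-r/2,t+r/2]$ has $\asymp r/\delta_t$ points, and a maximal $\delta_s$-separated net in the spherical cap $\{y\in\mathbb S^{2}:d_{\mathbb S^{2}}(x,y)<r/2\}$ has $\asymp (r/\delta_s)^{2}$ points. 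Taking their product produces a set $\mathcal G=\{(y_j,s_j):1\le j\le N\}\subset B_r(x,t)$ with
\[
N\;\asymp\;\frac{r}{\delta_t}\cdot\frac{r^{2}}{\delta_s^{2}}\;\asymp\;r^{3}\varepsilon^{-\frac{2}{\beta}-\frac{4}{\alpha-2}}\;=\;r^{3}\varepsilon^{-\frac{1}{p}}\;=\;\psi(r,\varepsilon).
\]
The hypothesis $r>\varepsilon^{1/(3p)}$ is precisely what guarantees $N\ge 1$. By construction any two distinct $(y_i,s_i),(y_j,s_j)\in\mathcal G$ satisfy $\mu^2_{\alpha,\beta}\ge c\eta^{2}\varepsilon^{2}$, and after choosing $\eta$ small enough and restricting to $r$ small, Proposition \ref{prop.bound.dist} upgrades this to $d_T\bigl((y_i,s_i),(y_j,s_j)\bigr)\ge c'\varepsilon$.

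Next I would feed $\mathcal G$ into the strong local nondeterminism. For each $j$, applying Proposition \ref{thm.non.local.deter} to the family $\{(y_j,s_j)\}\cup\{(y_k,s_k):k\ne j\}\cup\{(x,t)\}$ and using that the $\mu_{\alpha,\beta}$-distance from $(y_j,s_j)$ to every other listed point is $\ge c\varepsilon$, one obtains
\[
\sigma_j^{2}\;:=\;\mathrm{Var}\bigl(T(y_j,s_j)\,\big|\,T(x,t),\,T(y_k,s_k)\text{ for }k\ne j\bigr)\;\ge\;c_{0}\,\varepsilon^{2}.
\]
Order the grid arbitrarily and let $\mathcal F_{j}:=\sigma\bigl(T(x,t),T(y_{1},s_{1}),\dots,T(y_{j-1},s_{j-1})\bigr)$. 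Joint Gaussianity gives that the $\mathcal F_{j}$-conditional law of $T(y_j,s_j)$ is Gaussian with some mean $\mu_j(\omega)$ and a (deterministic) conditional variance $\widetilde\sigma_j^{2}\ge \sigma_j^{2}\ge c_0\varepsilon^{2}$. Hence
\[
\Prob\!\left(|T(y_j,s_j)-T(x,t)|<\varepsilon\,\Big|\,\mathcal F_{j}\right)
\;\le\;\Prob\!\left(|Z|<\tfrac{\varepsilon}{\widetilde\sigma_j}\right)
\;\le\;\Prob\!\bigl(|Z|<c_{0}^{-1/2}\bigr)\;=:\;q<1,
\]
where $Z\sim\mathcal N(0,1)$. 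Multiplying these conditional estimates yields
\[
\Prob\bigl(M_r(x,t)<\varepsilon\bigr)\;\le\;\Prob\!\Bigl(\,\bigcap_{j=1}^{N}\{|T(y_j,s_j)-T(x,t)|<\varepsilon\}\Bigr)\;\le\;q^{N}\;\le\;\exp\!\bigl(-A_{1}\,\psi(r,\varepsilon)\bigr)
\]
with $A_{1}=c\log(1/q)$ depending only on $\alpha,\beta$, as claimed.

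\medskip

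\noindent\textbf{Main obstacle.} The delicate point is the grid construction together with the verification of Proposition \ref{thm.non.local.deter}'s hypotheses at scale $\varepsilon$: one must check that after shrinking $\eta$ the grid points obey both the pairwise $\mu_{\alpha,\beta}$-separation (so the conditional variance lower bound $c\varepsilon^{2}$ really holds) \emph{and} that this lower bound can be applied on the full family (including $(x,t)$), while at the same time all grid points lie inside $B_r(x,t)$. One also must handle the threshold $\varepsilon_{0}$ appearing in Proposition \ref{thm.non.local.deter}, which is harmless for small $\varepsilon$ but requires a routine rescaling argument for moderate $\varepsilon$; the restriction $r>\varepsilon^{1/(3p)}$ ensures the set $\mathcal G$ is non-empty so that the bound is non-trivial. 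The rest is bookkeeping of constants.
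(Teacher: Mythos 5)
Your overall architecture --- sequentially conditioning on a grid of $N\asymp\psi(r,\varepsilon)$ points of $B_r(x,t)$, lower-bounding each conditional variance via strong local nondeterminism, and multiplying Anderson-type one-point bounds --- is the same as the paper's. The genuine gap is the step where you deduce $\sigma_j^2\ge c_0\varepsilon^2$ from Proposition \ref{thm.non.local.deter}. That proposition does \emph{not} bound the conditional variance below by the minimal $\mu_{\alpha,\beta}$-distance: its right-hand side is $C\min_k\rho_\alpha^2\bigl(d_{\mathbb{S}^2}(x_0,x_k)\bigr)$, so only the \emph{spatial} separations enter and the temporal separations are invisible. In your product grid every point $(y_j,s_j)$ coexists with of order $r/\delta_t$ other grid points $(y_j,s_k)$ having the very same spatial coordinate, so $\min_{k\ne j}\rho_\alpha^2\bigl(d_{\mathbb{S}^2}(y_j,y_k)\bigr)=0$ and the proposition gives only $\sigma_j^2\ge 0$. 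What your argument actually needs is a space--time strong local nondeterminism of the form $\mathrm{Var}\bigl(T(y_j,s_j)\mid\cdots\bigr)\ge c\min_k\mu^2_{\alpha,\beta}\bigl((y_j,s_j),(y_k,s_k)\bigr)$, which is not established in the paper (the bump-function argument of Appendix A removes only the spatial part, and the time variables drop out of the final bound). So this is not ``bookkeeping of constants'': it is exactly the decoupling of the many time-translates of a fixed spatial point that has no justification. Two smaller issues: the threshold $\varepsilon_0$ in Proposition \ref{thm.non.local.deter} concerns the $d_2$-distances of the conditioning points, which in your construction are $O(\varepsilon)\to0$, so it must be dealt with and not only mentioned; and $r>\varepsilon^{1/(3p)}$ guarantees only $r^3\gtrsim\delta_t\delta_s^2$, not $r\gtrsim\delta_s$ (since $\beta<\alpha-2$ one has $\varepsilon^{1/(3p)}<\varepsilon^{2/(\alpha-2)}$), so in the regime $\varepsilon^{1/(3p)}<r<\varepsilon^{2/(\alpha-2)}$ your spatial net degenerates and the count $N\asymp\psi(r,\varepsilon)$ needs a separate argument.

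For comparison, the paper's proof runs the same conditioning scheme but avoids the zero-spatial-separation problem by construction: for $r<1$ its subdivision of the time interval consists of a single cell, so the conditioning points have pairwise distinct spatial coordinates and the purely spatial bound of Proposition \ref{thm.non.local.deter} is applicable; the price is a much finer spatial mesh, of order $\varepsilon^{1/(2p)}$ instead of your $\varepsilon^{2/(\alpha-2)}$, so the conditional standard deviation obtained there is of order $\varepsilon^{(\alpha-2)/(4p)}$ rather than $\varepsilon$, and the per-point Anderson estimate is then performed with that (much smaller) variance. Your accounting --- mesh chosen so that the conditional standard deviation is comparable to $\varepsilon$ --- is the natural one for this exponent, but it is precisely the choice that forces the space--time nondeterminism you have not proved; as it stands, the proposal does not yield \eqref{eqn.small.ball.upper.bound}.
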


\begin{proof}
    The proof is based on the strong non-local determinism for the field $T(x,t)$ Propositon \ref{thm.non.local.deter}, and on a conditioning argument similar to \cite[Proposition 4.2]{LX}. Let us recall that 
    \[
    B_{r} (x,t):= \left\{ (y,s)\in \mathbb S^{2} \times [0,T] \, : \, d_{\mathbb S^{2}} (x,y)^{2} + | t-s|^{2} < r^{2}  \right\}.
    \]
    Let us consider the sets
    \begin{align*}
        & B_{\frac{r}{\sqrt{2}} } (x) := \left\{ y\in \mathbb S^{2} \, : \,  d_{\mathbb S^{2}} (x,y) < \frac{r}{\sqrt{2}}\right\}, & I_{\frac{r}{\sqrt{2}}} (t) := \left[ 0, t + \frac{r}{\sqrt{2}} \right),
    \end{align*}
    and note that $B_{\frac{r}{\sqrt{2}} } (x) \times  I_{\frac{r}{\sqrt{2}}} (t) \subset  B_{r} (x,t)$, so that 
    \begin{align*}
        \Prob \left( M_{r} (x,t) < \varepsilon \right) \leqslant \Prob \left( \max_{y \in B_{\frac{r}{\sqrt{2}} } (x)}  \max_{s \in I_{\frac{r}{\sqrt{2}}} (t) }  \left| T(y,s) - T(x,t) \right| <\varepsilon  \right).
    \end{align*}
    
    Let $F: A_{r} \rightarrow B_{\frac{r}{\sqrt{2}} } (x)$  be a local diffeomorphism, where $A_{r} \subset \mathbb{R}^{2}$ is a rectangle and $F(0,0) = x$, and for $r$ small enough such that  $[0,r] \times [0,r] \subset A_{r}$. We can choose $F$ in such a way that $d_{\mathbb{S}^{2}} (x_{i} , x_{j} ) \geqslant d_{\mathbb{R}^{2}} \left( (\theta_{i}, \varphi_{i}) , (\theta_{j}, \varphi_{j})  \right)$.

    Let $G: [0,r] \rightarrow [0, t+ r\slash \sqrt{2} ]$ be the bijection given by $G(\eta ) = (t \slash r + 1\slash \sqrt{2} ) \eta$ so that $G(0) =0$ and $G(r) = t+ r \slash \sqrt{2}$. We then have a local diffeomorphism 
    \[
    H : [0,r]\times [0,r]\times [0,r] \longrightarrow B_{\frac{r}{\sqrt{2}} } (x) \times  I_{\frac{r}{\sqrt{2}}} (t) .
    \]
    Let us divide $ [0,r]\times [0,r]$ into sub-rectangles of side-length
    \[
        \frac{r}{\bigg\lfloor \frac{r}{\varepsilon^{\frac{1}{2p}}} \bigg \rfloor +1} ,
    \]
    and $[0,r]$ into sub-rectangles of side-length $r \slash ( \lfloor r \rfloor +1)$. We then get a division of $[0,r]\times [0,r]\times [0,r]$ into $N$ sub-rectangles, where 
    \[
    N =  \left( \bigg\lfloor \frac{r}{\varepsilon^{\frac{1}{2p}}} \bigg \rfloor +1 \right)^{2} ( \lfloor r \rfloor +1) \geqslant  \frac{r^{3}}{\varepsilon^{\frac{1}{p}}}.
    \]
    Note that $N > 1$ since $r>\varepsilon^{\frac{1}{3p}}$. Now let $(\theta_{i} , \varphi_{i} )$ for $i=1,\ldots , N$ denote the upper right-most vertex in the subdivision of  $[0,r]\times [0,r]$, and $\xi_{i}$ for $i=1,\ldots , N$ be the right-most endpoint in the subdivision of $[0,r]$, and set $p_{i} := (x_{i}, t_{i})$, where $x_{i} := F (\theta_{i}, \varphi_{i})$, $t_{i} := G(\xi_{i})$. Note that $p_{i} \in B_{\frac{r}{\sqrt{2}} } (x) \times  I_{\frac{r}{\sqrt{2}}} (t) \subset  B_{r} (x,t)$, and by construction we have that
    \begin{align*}
         d_{\mathbb{R}^{2}} \left( (\theta_{i}, \varphi_{i}) , (\theta_{j}, \varphi_{j})  \right) \geqslant c \, \frac{r}{\bigg\lfloor \frac{r}{\varepsilon^{\frac{1}{2p} }} \bigg \rfloor +1} \quad \text{ for all } i,j=1,\ldots , N
    \end{align*}
    for some constant $c>0$.  Set
    \begin{align*}
        A_{j} := \left\{ \max_{i=1, \ldots , j} \vert T(p_{i}) - T(x,t) \vert < \varepsilon \right\},
    \end{align*}
    and write 
    \begin{align*}
        \mathbb{P} (A_{j} ) = \mathbb{E} \left[ 1_{A_{j-1}} \, \mathbb{P} \left( \vert T(p_{i}) - T(x,t) \vert < \varepsilon \, \left. \right| T(p_{i}) \, : \, i=0, \ldots , j-1  \right) \right].
    \end{align*}
    By Proposition \ref{thm.non.local.deter} we have that 
    \begin{align*}
         &   \text{Var} \left( T(p_{j}) \, | \, T(p_{i}) \, : \, i=0, \ldots , j-1   \right) 
         \\
         & \geqslant c_{2} \min_{1\leqslant i \leqslant j-1}\rho^{2}_{\alpha} (d_{\mathbb{S}^{2}} (x_{j}, x_{i} )) =   c_{2} \min_{1\leqslant i \leqslant j-1} d_{\mathbb{S}^{2}} (x_{j}, x_{i} )^{\alpha-2}
         \\
         & \geqslant    c_{2} \min_{1\leqslant i \leqslant j-1}   d_{\mathbb{R}^{2}} \left( (\theta_{j}, \varphi_{j}) , (\theta_{i}, \varphi_{i})  \right)^{\alpha-2}
         \\
         & \geqslant c_{2} c^{\alpha -2}  \left(  \frac{r}{\bigg\lfloor \frac{r}{\varepsilon^{\frac{1}{2p}}} \bigg \rfloor +1} \right)^{\alpha-2} =: \gamma.
    \end{align*}
    By the estimate $x \leqslant \lfloor x \rfloor +1 \leqslant 2x$, for $x\geqslant 1$, it follows that 
    \begin{align}\label{eqn.a.bound}
        c_{2} \left( \frac{c}{2} \right)^{\alpha-2} \leqslant \frac{\gamma}{\varepsilon^{\frac{\alpha-2}{2p}}} \leqslant c_{2} c^{\alpha-2} .
    \end{align}
    The conditional distribution of $T(p_{j})$ given all the $T(p_{i})$ is Gaussian with conditional variance $ \text{Var} \left( T(p_{j}) \, | \, T(p_{i}) \, : \, i=0, \ldots , j-1   \right)$ since the field $T(p)$ is Gaussian. Then by the above lower bound on the conditional variance and Anderson's inequality \cite{Anderson}  it follows that 
    \begin{align*}
        & \mathbb{P} \left( \vert T(p_{j}) - T(x,t) \vert < \varepsilon \, \left. \right| T(p_{i}) \, : \, i=0, \ldots , j-1  \right) \leqslant \mathbb{P} \left( \vert Z \vert \leqslant \frac{\varepsilon^{\frac{\alpha-2}{2p}}}{\sqrt{\gamma}} \right)\leqslant \exp(-C),
    \end{align*}
    where $Z$ is a standard Gaussian random variable and the last inequality holds for some constant $C=C(\alpha, \beta)>0$ thanks to \eqref{eqn.a.bound}. Thus,
    \begin{align*}
        \mathbb{P} (A_{N}) \leqslant \exp(-C)  \mathbb{P} (A_{N-1}) \leqslant \cdots  \leqslant \exp( -NC),
    \end{align*}
    and hence 
    \begin{align*}
         & \mathbb{P} \left( M_{r} (x,t) < \varepsilon \right) \leqslant  \mathbb{P} \left( \max_{i=1, \ldots , N} \vert T(p_{i}) - T(x,t)) \vert < \varepsilon \right) = \mathbb{P} (A_{N}) 
         \\
         & \leqslant \exp( -NC) \leqslant \exp \left( - C \, \frac{r^{3}}{\varepsilon^{\frac{1}{p}}}  \right).
    \end{align*}
\end{proof}

\begin{prop}\label{thm.small.ball.estimate.lower.bound}
    Let $\{ T(x,t) , \, (x, t) \in \mathbb S^{2} \times [0,T] \}$ be a time-spherical Gaussian random field satisfying Assumption I. Then there exist a positive finite constant $A_{2} = A_{2} ( \alpha , \beta )$ such that for all $\varepsilon >0$ we have that 
    \begin{align}\label{eqn.small.ball.lower.bound}
    \mathbb{P} \left( M_{r} (x, t) < \varepsilon \right) \geqslant \exp \left( - A_{2} \psi (r, \varepsilon) \right)    ,
    \end{align}
    where $\psi= \psi_{\alpha ,\beta}$ is given by \eqref{eqn.psi}.
\end{prop}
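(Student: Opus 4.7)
The plan is to invoke the lower bound half of Talagrand's Lemma~\ref{lemma.talagrand} applied to the Gaussian field $\{T(y,s):(y,s)\in K\}$ on the compact set $K:=\overline{B_r(x,t)}$. Since by the triangle inequality
\[
M_r(x,t)\;\leq\;\sup_{(y,s),(y',s')\in K}|T(y,s)-T(y',s')|,
\]
we have $\mathbb{P}(M_r(x,t)<\varepsilon)\geq \mathbb{P}\bigl(\sup_{K\times K}|T-T|<\varepsilon\bigr)$, so it is enough to bound the latter from below.

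The key step is to produce a covering number estimate for $(K,d_T)$ with the right exponents. By Proposition~\ref{prop.bound.dist}, on scales below a fixed threshold the canonical distance $d_T$ is equivalent to the anisotropic distance $\mu_{\alpha,\beta}$ in~\eqref{eqn.induced.dist.mu}. Since $\mu_{\alpha,\beta}^{2}((y,s),(y',s'))\leq |s-s'|^{\beta}+\rho_{\alpha}^{2}(d_{\mathbb{S}^{2}}(y,y'))$, a rectangle of time-width $\sim \varepsilon^{2/\beta}$ and $d_{\mathbb{S}^{2}}$-width $\sim \varepsilon^{2/(\alpha-2)}$ has $d_T$-diameter $\lesssim \varepsilon$. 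Covering the time interval underlying $K$ (of length $\leq 2r$) requires $\lesssim r\,\varepsilon^{-2/\beta}$ intervals, and covering the spherical projection of $K$ (a geodesic disk of radius $\leq r$ in the two-dimensional surface $\mathbb{S}^{2}$) requires $\lesssim r^{2}\,\varepsilon^{-4/(\alpha-2)}$ disks. Multiplying yields
\[
N(K,d_T,\varepsilon)\;\leq\; C\,r^{3}\,\varepsilon^{-(2/\beta+4/(\alpha-2))}\;=\;C\,r^{3}\,\varepsilon^{-1/p}\;=:\;\Psi(\varepsilon),
\]
valid for all $\varepsilon$ below some $\delta_0>0$. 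The function $\Psi$ satisfies the doubling hypothesis \eqref{eqn.condition.talagrand} with $a_1=a_2=2^{1/p}>1$, so Lemma~\ref{lemma.talagrand} gives
\[
\mathbb{P}\Bigl(\sup_{K\times K}|T(y,s)-T(y',s')|\leq \varepsilon\Bigr)\;\geq\;\exp(-k\,\Psi(\varepsilon))\;=\;\exp\!\bigl(-A_2\,r^{3}\varepsilon^{-1/p}\bigr)
\]
for $\varepsilon\in(0,\delta_0)$, which is exactly \eqref{eqn.small.ball.lower.bound} in this range. For $\varepsilon\geq \delta_0$ the left-hand side is bounded below by a fixed positive constant, so enlarging $A_2$ absorbs that regime.

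The main obstacle is the covering number estimate: one has to transfer from $d_T$ to the product distance $\mu_{\alpha,\beta}$ via Proposition~\ref{prop.bound.dist} and then count covering sets under the anisotropic scaling (time at scale $\varepsilon^{2/\beta}$, space at scale $\varepsilon^{2/(\alpha-2)}$), keeping track of the dimension $2$ of $\mathbb{S}^{2}$ so that the exponents combine correctly into $2/\beta+4/(\alpha-2)=1/p$. The remaining ingredients---verifying the doubling condition and piecing together the Talagrand estimate---are routine. A minor bookkeeping point is ensuring the scale-restrictions imposed by Proposition~\ref{prop.bound.dist} are respected; this is handled by working with $\varepsilon<\delta_0$ and treating the complementary range by trivial estimates and an adjustment of the constant $A_2$.
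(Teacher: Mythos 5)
Your proposal is correct and follows essentially the same route as the paper: reduce $d_T$ to $\mu_{\alpha,\beta}$ via Proposition~\ref{prop.bound.dist}, bound the covering number $N(B_r(x,t),\mu_{\alpha,\beta},\varepsilon)$ by $C\,r^3\varepsilon^{-1/p}$, and apply the lower-bound half of Lemma~\ref{lemma.talagrand}. The only difference is the way you obtain the covering estimate --- you cover directly by anisotropic rectangles of time-width $\varepsilon^{2/\beta}$ and spatial radius $\varepsilon^{2/(\alpha-2)}$, whereas the paper compares volumes by an explicit spherical-coordinate integration showing $\mathrm{Vol}(B^{\mu_{\alpha,\beta}}_\varepsilon)\asymp\varepsilon^{1/p}$ and $\mathrm{Vol}(B_r)\asymp r^3$; both yield the same exponent. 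Your explicit handling of the regime $\varepsilon\geq\delta_0$ (absorbing it by enlarging $A_2$) and the $M_r\leq\sup_{K\times K}|T-T|$ reduction are correct and in fact make explicit two small steps that the paper leaves implicit.
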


\begin{proof}
    Let $N(B_{r} (x,t),d_{T} , \varepsilon)$ be the smallest number of $d_{T}$-balls of radius $\varepsilon$ needed to cover  $B_{r} (x,t)$. By Proposition \ref{prop.bound.dist} we know that the canonical metric $d_{T}$ is equivalent to $\mu_{\alpha, \beta}$, and hence it is enough to consider $N(B_{r} (x,t),\mu_{\alpha, \beta}  , \varepsilon)$. The $\mu_{\alpha, \beta}$-ball of radius $\varepsilon$ is given by
    \begin{align*}
        & B^{\mu_{\alpha, \beta}}_{\varepsilon}(x,t) = \left\{ (y,s) \in \mathbb{S}^{2} \times [0,T] \, : \,\mu_{\alpha, \beta} ((x,t), (y,s)) < \varepsilon \right\} 
        \\
        & = \left\{(y,s) \in \mathbb{S}^{2} \times [0,T] \, : \, |t-s|^{\beta} + (1- |t-s|^{\beta}) \rho_{\alpha}^{2} (d_{\mathbb S^{2}} (x,y) ) < \varepsilon^{2} \right\} .
    \end{align*}
    Due to isotropy and stationary of the field, we can assume  $t=0$ and $x=N$, the north pole. Then  $d(N,y) = \theta$ and in spherical coordinates we get 
    \begin{align*}
        & \text{Vol} ( B^{\mu_{\alpha, \beta}}_{\varepsilon}(N,0) ) = \int_{B^{\mu_{\alpha, \beta}}_{\varepsilon}(N,0)} dy ds = \int_{0}^{2\pi} d \varphi \int_{ \left\{ \vert s \vert^{\beta} \left( 1-\theta^{\alpha-2} \right) +  \theta^{\alpha-2} < \varepsilon^{2} \right\} } \sin \theta d\theta ds.
    \end{align*}
    Note that
    \begin{align*}
        &  \left\{ \vert s \vert^{\beta} \left( 1-\theta^{\alpha-2} \right) +  \theta^{\alpha-2} < \varepsilon^{2} \right\} 
        \\
        & =  \left\{ (s,\theta ) \, : \,   0 \leqslant \theta \leqslant \varepsilon^{\frac{2}{\alpha-2}} \, , \,   \vert s \vert < \left( \frac{\varepsilon^{2} - \theta^{\alpha-2}}{1-\theta^{\alpha-2}}\right)^{\frac{1}{\beta}} \right\} \cup \left\{ (s,\theta ) \, : \,  1 < \theta < \pi  \, , \,  \left( \frac{ \theta^{\alpha-2}- \varepsilon^{2} }{\theta^{\alpha-2}-1}\right)^{\frac{1}{\beta}} < s <T  \right\}  ,
    \end{align*}
    and since we are interested in the asymptotics near $(x,t) = (N,0)$, we can rule oiut the second set since $d(N, y) = \theta >1$. We then have that 
    \begin{align*}
        &  \text{Vol} ( B^{\mu_{\alpha, \beta}}_{\varepsilon}(N,0) ) = 2\pi \int_{0}^{\varepsilon^{\frac{2}{\alpha-2}}} \int_{- \left( \frac{\varepsilon^{2} - \theta^{\alpha-2}}{1-\theta^{\alpha-2}}\right)^{\frac{1}{\beta}}}^{\left( \frac{\varepsilon^{2} - \theta^{\alpha-2}}{1-\theta^{\alpha-2}}\right)^{\frac{1}{\beta}}} \sin \theta  ds d\theta 
        \\
        & = 2\pi \int_{0}^{\varepsilon^{\frac{2}{\alpha-2}}} 2 \left( \frac{\varepsilon^{2} - \theta^{\alpha-2}}{1-\theta^{\alpha-2}}\right)^{\frac{1}{\beta}}  \sin \theta d\theta =  4\pi \varepsilon^{\frac{2}{\beta} + \frac{2}{\alpha-2}} \int_{0}^{1} \left( \frac{1-u^{\alpha-2}}{1- \varepsilon^{2} u^{\alpha-2}} \right)^{\frac{1}{\beta}} \sin \left( \varepsilon^{\frac{2}{\alpha-2}} u \right) du
        \\
        & = 4\pi \varepsilon^{\frac{2}{\beta} + \frac{4}{\alpha-2}} \int_{0}^{1} u  \left( \frac{1-u^{\alpha-2}}{1- \varepsilon^{2} u^{\alpha-2}} \right)^{\frac{1}{\beta}} \frac{ \sin \left( \varepsilon^{\frac{2}{\alpha-2}} u \right) }{   \varepsilon^{\frac{2}{\alpha-2}} u } du =:  \varepsilon^{\frac{1}{p}  } F_{\alpha ,\beta} (\varepsilon),
    \end{align*}
    where the function $F_{\alpha ,\beta} (\varepsilon)$ is bounded uniformly in $\varepsilon$ above and below by a positive constant. Similarly, one can show that 
    \begin{align*}
        & \text{Vol}(B_{r} (N,0)) = 2 \pi r^{3} \int_{0}^{1} u (1-r u)^{\frac{1}{2}}   \frac{\sin (r u)}{ru } du =: r^{3} H(r),
    \end{align*}
    where the function $H(r)$ is bounded uniformly in $r$ above and below by a positive constant. Thus,
    \begin{align}
        N(B_{r} (N,0),\mu_{\alpha, \beta }  , \varepsilon)\leqslant \frac{\text{Vol} \left( B_{r} (N,0) \right)}{\text{Vol} (B^{\mu_{\alpha,\beta }}_{\varepsilon}(N, 0))} = \frac{r^{3} H(r)}{\varepsilon^{\frac{1}{p} } F_{\alpha ,\beta} (\varepsilon)},
    \end{align}
    and hence we can take 
    \begin{align*}
        \psi_{\alpha ,\beta} (r,\varepsilon ) : =  \frac{r^{3}}{\varepsilon^{\frac{1}{p} } },
    \end{align*}
    which satisfied the assumption of Lemma \ref{lemma.talagrand}.
\end{proof}

\begin{rmk}
    The proof of Proposition \ref{thm.small.ball.estimate.lower.bound} does not rely on the strong non-local determinism property from Proposition \ref{thm.non.local.deter}. In particular, it holds for any $\alpha \geqslant 4$ as well.
\end{rmk}

\section{Chung's law of the iterated logarithm}
The goal of this section is to show that the random variable 
\begin{align*}
    & \kappa_{\alpha , \beta}:= \liminf_{r\rightarrow 0} \phi (r) M_{r} (x,t),
\end{align*}
where 
\begin{align*}
    & M_{r} (x,t) := \max_{ (y,s)  \in B_{r} (x,t)} \vert T(y,s) - T(x,t) \vert,
\end{align*}
satisfies 
\begin{align*}
    \Prob \left( 0 < \kappa_{\alpha , \beta} < \infty \right) =1. 
\end{align*}
In the next section we will show that $\kappa_{\alpha , \beta}$ is constant a.s. as a byproduct of Theorem \ref{thm.small.deviations}. Let us recall that 
\begin{align*}
    & p = p(\alpha , \beta) =  \left( \frac{2}{\beta} + \frac{4}{\alpha -2} \right)^{-1}  & \phi (r) = \phi_{\alpha , \beta} (r) := \left( \frac{\log \vert \log r \vert}{r^{3}} \right)^{p}.
\end{align*}

Firs, let us prove that   $\kappa_{\alpha ,\beta}>0$ a.s. 

\begin{prop}\label{thm.lower.bound}
        Let $\{ T(x,t) , \, (x, t) \in \mathbb S^{2} \times [0,T] \}$ be a time-spherical Gaussian random field satisfying Assumption I. Then 
        \begin{equation}\label{eqn.chung.lower.bound}
            \kappa_{\alpha ,\beta}\geqslant A_{1}^{p}  \; \; \; a.s. 
        \end{equation}
        where $\phi (r)$, $p$, and $A_{1}$ are given by \eqref{eqn.phi}, \eqref{eqn.p.rate}, and \eqref{eqn.small.ball.upper.bound} respectively.
\end{prop}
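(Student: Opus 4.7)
The plan is a standard Borel--Cantelli argument along a geometric subsequence, with Proposition~\ref{thm.small.ball.estimate.upper.bound} supplying the quantitative input. Fix $\delta\in(0,A_1^p)$ and $\theta\in(0,1)$, and set $r_n:=\theta^n$ and
$$\varepsilon_n:=(A_1^p-\delta)\,\phi(r_n)^{-1}=(A_1^p-\delta)\left(\frac{r_n^3}{\log|\log r_n|}\right)^{p}.$$
For $n$ large enough one has $\log|\log r_n|>(A_1^p-\delta)^{1/p}$, which is exactly the condition $r_n>\varepsilon_n^{1/(3p)}$ required to apply Proposition~\ref{thm.small.ball.estimate.upper.bound}. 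Computing the exponent,
$$A_1\,\psi(r_n,\varepsilon_n)=A_1\,r_n^3\,\varepsilon_n^{-1/p}=A_1(A_1^p-\delta)^{-1/p}\log|\log r_n|,$$
so that
$$\Prob\bigl(M_{r_n}(x,t)<\varepsilon_n\bigr)\leq |\log r_n|^{-\lambda},\qquad \lambda:=A_1(A_1^p-\delta)^{-1/p}>1.$$
Since $|\log r_n|=n|\log\theta|$ and $\lambda>1$, this bound is summable in $n$, and the first Borel--Cantelli lemma yields that almost surely $M_{r_n}(x,t)\geq\varepsilon_n$ for all but finitely many $n$.

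To upgrade from the subsequence to all $r\to 0$ I would invoke monotonicity. For $r\in(r_{n+1},r_n]$, the inclusion $B_{r_{n+1}}(x,t)\subseteq B_r(x,t)$ gives $M_r(x,t)\geq M_{r_{n+1}}(x,t)$, while a direct differentiation shows that $\phi$ is decreasing on a neighbourhood of $0$, so $\phi(r)\geq \phi(r_n)$. Hence almost surely, for every such $r$ with $n$ sufficiently large,
$$\phi(r)M_r(x,t)\geq (A_1^p-\delta)\,\frac{\phi(r_n)}{\phi(r_{n+1})}=(A_1^p-\delta)\,\theta^{3p}\left(\frac{\log|\log r_n|}{\log|\log r_{n+1}|}\right)^{p}.$$
The double-log ratio tends to $1$, and therefore $\liminf_{r\to 0}\phi(r)M_r(x,t)\geq \theta^{3p}(A_1^p-\delta)$ a.s. Letting $\delta\downarrow 0$ and $\theta\uparrow 1$ along countable sequences concludes that $\kappa_{\alpha,\beta}\geq A_1^p$ almost surely.

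The one delicate point is the strict inequality $\lambda>1$, and this is what forces the introduction of the slack parameter $\delta>0$: the naive choice $\varepsilon_n=A_1^p\,\phi(r_n)^{-1}$ would give exponent exactly $1$, producing the non-summable bound $\sum 1/n$. Everything else is routine: the verification of $r_n>\varepsilon_n^{1/(3p)}$ is immediate from the super-exponential growth of $\log|\log r_n|$, and the monotonicity of $\phi$ follows from computing the sign of the derivative of $\log|\log r|/r^3$. Note that no additional appeal to the non-local determinism of Proposition~\ref{thm.non.local.deter} is needed, since its content has already been absorbed into the upper small-ball bound of Proposition~\ref{thm.small.ball.estimate.upper.bound}.
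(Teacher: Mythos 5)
Your proof is correct and follows essentially the same route as the paper: a Borel--Cantelli argument along the geometric subsequence $r_n$ (the paper takes $r_n=R^{-n}$, you take $\theta^n$), with the exponent made strictly larger than $1$ by a slack parameter (your $\delta$, the paper's choice $\gamma<(A_1/R^3)^p$), followed by the same monotonicity interpolation of $\phi$ and $M_r$ between consecutive radii and a limit $\theta\uparrow 1$, $\delta\downarrow 0$ (resp.\ $R\to 1$, $\sup_\gamma$). The only blemish is the phrase ``super-exponential growth of $\log|\log r_n|$'' (it grows like $\log n$); what you actually need, and correctly verified earlier, is merely that it diverges.
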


\begin{proof}
    For $n \geqslant 1$ and $R>1$ let us set 
    \begin{align*}
        & r_{n} := R^{-n} , \quad  0 < \gamma < \left( \frac{  A_{1} }{R^{3}} \right)^{p} ,
        \\
        & A_{n} := \left\{ \phi (r_{n-1})M_{r_{n}} (x,t) \leqslant \gamma \right\}.
    \end{align*}
    Then by \eqref{eqn.small.ball.upper.bound} we have that 
    \begin{align*}
        & \mathbb{P} (A_{n} ) \leqslant \exp \left(  - A_{1} \psi \left( r_{n} , \frac{\gamma}{\phi (r_{n-1})} \right) \right) = \exp \left( - A_{1}   \frac{r_{n}^{3}}{\gamma^{\frac{1}{p}}}  \phi (r_{n-1})^{\frac{1}{p}}  \right),
        \\
        & = \exp \left( -A_{1} \frac{r_{n}^{3}}{ r_{n-1}^{3}} \log  | \log r_{n-1} |  \gamma^{- \frac{1}{p}} \right) = \exp \left(  -  \frac{A_{1}}{R^{3}} \gamma^{- \frac{1}{p}} \log ((n-1) \log R)\right)
        \\
        & = \left( \frac{1}{(n-1) \log R} \right)^{\frac{A_{1}}{R^{3}} \gamma^{- \frac{1}{p}} }
    \end{align*}
    and hence $\sum_{n=1}^{\infty} \mathbb{P} (A_{n} ) < \infty$ since $A_{1} > \gamma^{\frac{1}{p}} R^{3}$. Note that $\phi (s) > \phi (t)$ for $0<s<t \ll 1$ and then 
    \begin{align*}
        \inf_{r_{n} \leqslant r \leqslant r_{n-1}} \phi (r) \max_{(y,s)\in B_{r}(x,t)} \vert T(y,s) - T(x,t) \vert \geqslant \phi(r_{n-1}) \max_{ (y,s)\in B_{r_{n}}(x,t)} \vert T(y,s) - T(x,t) \vert .
    \end{align*}
Note that, for any $ 0 < \gamma <  \frac{ A_{1}^{p}  }{R^{3}}$
\begin{align*}
    & \mathbb{P} \left( \liminf_{r\rightarrow 0} \phi (r) \max_{ (y,s)\in B_{r}(x,t)} \vert T(y,s) - T(x,t) \vert < \gamma \right)
    \\
    & \leqslant \mathbb{P} \left( \bigcup_{k\geqslant 1} \bigcap_{n\geqslant k} \left\{ \inf_{r_{n} \leqslant r \leqslant r_{n-1}} \phi (r) \max_{(y,s)\in B_{r}(x,t)} \vert T(y,s) - T(x,t) \vert  < \gamma \right\}         \right)
    \\
    &  \leqslant \mathbb{P} \left( \bigcup_{k\geqslant 1} \bigcap_{n\geqslant k} \left\{ \phi(r_{n-1}) \max_{ (y,s)\in B_{r_{n}}(x,t)} \vert T(y,s) - T(x,t) \vert  < \gamma \right\}         \right) = \mathbb{P} \left( \liminf_{n\rightarrow \infty} A_{n} \right) =0,
\end{align*}
    where the latter is zero by the first Borel-Cantelli Lemma. Thus, we have that 
    \begin{align*}
        \liminf_{r\rightarrow 0} \phi (r) \max_{(y,s)\in B_{r}(x,t)} \vert T(y,s) - T(x,t) \vert  \geqslant \gamma \; \; a.s. \quad \text{ for every }  0 < \gamma < \left( \frac{ A_{1}  }{R^{3}} \right)^{p},
    \end{align*}
    and the result follows by letting $R$ go to 1 and  taking the supremum over $\gamma$ on both sides.
\end{proof}

We now prove that $\kappa_{\alpha , \beta}<\infty$ a.s.

\begin{prop}\label{thm.upper.bound}
        Let $\{ T(x,t) , \, (x, t) \in \mathbb S^{2} \times [0,T] \}$ be a time-spherical Gaussian random field satisfying Assumption I. Then 
        \begin{equation}\label{eqn.chung.lower.bound}
           \kappa_{\alpha ,\beta}  \leqslant A_{2, }^{p}  \; \; \; a.s. 
        \end{equation}
        where $\phi (r)$, $p$, and $A_{2}$ are given by \eqref{eqn.phi}, \eqref{eqn.p.rate}, and   \eqref{eqn.small.ball.lower.bound} respectively.
\end{prop}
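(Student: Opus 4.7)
The plan is to apply the converse Borel--Cantelli lemma to a carefully chosen sequence of events, mirroring the structure of Proposition~\ref{thm.lower.bound} but with the small-ball \emph{lower} bound from Proposition~\ref{thm.small.ball.estimate.lower.bound} in place of the upper bound. Fix $\gamma > A_{2}^{p}$, a scale factor $\rho \in (0,1)$, and set $r_n := \rho^{n}$ and $\varepsilon_n := \gamma/\phi(r_n)$. Consider the events
$$
B_n := \{\phi(r_n) M_{r_n}(x,t) < \gamma\} = \{M_{r_n}(x,t) < \varepsilon_n\}.
$$
Using the explicit forms \eqref{eqn.phi}--\eqref{eqn.psi}, one computes $r_n^{3}\varepsilon_n^{-1/p} = \log|\log r_n|/\gamma^{1/p}$, whence Proposition~\ref{thm.small.ball.estimate.lower.bound} yields
$$
\Prob(B_n) \geqslant \exp\left(-\frac{A_{2}}{\gamma^{1/p}}\log|\log r_n|\right) = \left(n\log \rho^{-1}\right)^{-A_{2}/\gamma^{1/p}}.
$$
Since $\gamma > A_{2}^{p}$ forces $A_{2}/\gamma^{1/p} < 1$, the series $\sum_n \Prob(B_n)$ diverges.

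The second task is to handle the dependence between the $B_n$'s, which share the reference value $T(x,t)$ and have nested domains $B_{r_{n+1}}(x,t) \subset B_{r_n}(x,t)$. The natural decoupling tool is the Karhunen--Lo\`eve expansion \eqref{eqn.time.dep.random.field}: since $\{a_{\ell m}(\cdot)\}_{\ell, m}$ is a family of mutually independent stationary Gaussian processes, one can split $T = \hat{T}_n + \check{T}_n$ into independent low-frequency ($\ell \leqslant L_n$) and high-frequency ($\ell > L_n$) pieces for any cutoff $L_n$. Choosing $L_n \to \infty$ at a rate calibrated to the scale $r_n$, the smooth piece $\hat{T}_n$ can be shown, via Talagrand's deviation bound \eqref{lemma.talagrand.diameter} applied to a finite-dimensional Gaussian field, to have oscillation $o(\varepsilon_n)$ on $B_{r_n}(x,t)$ with overwhelming probability. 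Up to a negligible event, $B_n$ is then determined by $\check{T}_n$ alone; and by passing to a sparse subsequence $n_k$ with pairwise disjoint spectral windows $(L_{n_{k-1}}, L_{n_k}]$, the resulting truncated events become mutually independent.

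Independence along the subsequence, combined with the divergent probability sum, yields $\Prob(B_{n_k} \text{ i.o.}) = 1$ via the second Borel--Cantelli lemma, so $\liminf_{k} \phi(r_{n_k}) M_{r_{n_k}}(x,t) \leqslant \gamma$ almost surely. Letting $\gamma \downarrow A_{2}^{p}$ along a countable sequence then gives $\kappa_{\alpha,\beta} \leqslant A_{2}^{p}$ a.s., as required. The hard part will be the calibration of the cutoffs $L_n$: they must be large enough that the smooth residual $\hat{T}_n$ is genuinely negligible at scale $\varepsilon_n$ on $B_{r_n}(x,t)$, yet small enough that the truncated small-ball event still carries essentially all of the probability guaranteed by Proposition~\ref{thm.small.ball.estimate.lower.bound}. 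The precise interplay is governed by the exponents $\alpha$, $\beta$, and $p$, and the restriction $2 + \beta < \alpha < 4$ from Assumption~I is exactly what makes such a balance feasible.
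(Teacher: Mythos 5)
Your overall architecture is the same as the paper's (a spectral decoupling plus a second Borel--Cantelli argument for the band and a first Borel--Cantelli argument for the remainder, with Talagrand's bound \eqref{lemma.talagrand.diameter} controlling the latter), and your first step is fine: with $r_n=\rho^n$ one indeed gets $\Prob(B_n)\geqslant (n\log\rho^{-1})^{-A_2\gamma^{-1/p}}$ with exponent $<1$, so the sum diverges. The decoupling step, however, has concrete gaps. First, independence does not follow from what you wrote: up to your claimed negligible correction the events are determined by the \emph{nested} tails $\{\ell>L_{n_k}\}$, and disjointness of the windows $(L_{n_{k-1}},L_{n_k}]$ is irrelevant to those tails. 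To get independent events you must localize onto two-sidedly truncated bands, as the paper does with $\Xi_n=\sum_{\ell=d_n+1}^{d_{n+1}}\sum_m a_{\ell m}(t)Y_{\ell m}(x)$, and this forces you to also prove that the ultra-high-frequency remainder $\ell>L_{n_{k+1}}$ is negligible at scale $\varepsilon_{n_k}$ on $B_{r_{n_k}}(x,t)$ (the paper's terms $S_3$, $S_4$). Moreover, to transfer the small-ball lower bound of Proposition \ref{thm.small.ball.estimate.lower.bound} to the truncated field the paper does not argue that the event ``is essentially'' the truncated event: it applies Anderson's inequality \cite{Anderson}, which gives $\Prob(\text{band small})\geqslant\Prob(M_{r_n}(x,t)\leqslant\cdot)$ with no negligibility needed in that direction; your reduction needs the negligibility in both directions.

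More seriously, your central claim that the low-frequency piece $\hat T_n$ (all $\ell\leqslant L_n$) has oscillation $o(\varepsilon_n)$ on $B_{r_n}(x,t)$ with overwhelming probability is obstructed in the \emph{time} direction, and no calibration of the purely spatial cutoff $L_n$ can repair it. By \eqref{eqn.covariance.function}, $\E[(a_{\ell m}(t)-a_{\ell m}(s))^2]=2C_\ell(0)|t-s|^\beta$, so already the $\ell=1$ modes give $\hat T_n$ temporal oscillation of order $r_n^{\beta/2}$ on the ball, whatever $L_n$ is; on the other hand $\varepsilon_n=\gamma/\phi(r_n)\asymp r_n^{3p}$ up to logarithms, and $3p>\beta/2$ is equivalent to $\alpha>2+\beta$, i.e. exactly Assumption I. Hence $\varepsilon_n\ll r_n^{\beta/2}$, and the probability that $\hat T_n$ oscillates by more than $\varepsilon_n$ tends to one rather than to zero, so an ``overwhelming probability'' bound via \eqref{lemma.talagrand.diameter} is not available. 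The hard part you defer is therefore not a routine calibration of $L_n$: as set up, the reduction cannot be carried out. This is precisely the delicate point the paper faces in bounding the low-frequency contribution $S_1$ to the diameter of $\Theta_n$ in its proof of \eqref{eqn.almost.there.2} (note that its bound there treats the partial sum $\sum_{\ell\leqslant d_{n+1}}\ell^{1-\alpha}$ like a tail, so that step deserves careful scrutiny as well); you should confront your scheme with that estimate rather than leave it as a calibration to be done later.
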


\begin{proof}
    Let us set 
    \begin{align*}
        & r_{n} := e^{-n} \lfloor e^{n^{2}} \rfloor^{-1}, & d_{n} :=\lfloor e^{n^{2}} \rfloor ,
    \end{align*}
    and note that 
    \begin{align*}
        &r_{n} d_{n} = e^{-n} &  r_{n} d_{n+1} > e^{n}.
    \end{align*}
    Let us consider the random fields
    \begin{align*}
        & \Xi_{n} (x,t):= \sum_{\ell =d_{n}+1}^{d_{n+1}}\sum_{m= -\ell}^{\ell} a_{\ell m } (t) Y_{\ell m} (x) , &\Theta_{n} (x) := T(x,t) -  \Xi_{n} (x,t)
    \end{align*}
    that is, 
    \begin{align*}
        &\Theta_{n} (x,t) = \sum_{\ell = 1}^{d_{n}} \sum_{m= -\ell}^{\ell} a_{\ell m } (t) Y_{\ell m} (x)  + \sum_{\ell = d_{n+1} +1}^{\infty} \sum_{m= -\ell}^{\ell} a_{\ell m } (t) Y_{\ell m} (x).
    \end{align*}
    For a fixed $(x,t) \in \mathbb S^{2} \times [0,T]$, the sequence of random fields $\{ \Xi_{n} (x,t) , (x,t) \in \mathbb S^{2} \times [0,T] \}_{n=1}^{\infty}$ is independent.  Indeed, if $n<m$ then $d_{n} + 1< d_{n+1} < d_{m}$ and the sets
    \begin{align*}
        &\{ a_{\ell m} (t) , \, m = -\ell, \ldots , \ell , \; \; \ell = d_{n}+1  , \ldots , d_{n+1}  \} , & \{ a_{\ell m} (t) , \, m = -\ell, \ldots , \ell , \; \; \ell = d_{m}+1  , \ldots , d_{m+1}  \}
    \end{align*}
    are uncorrelated by \eqref{eqn.covariance.alm} and hence independent because they are Gaussian. Moreover, for each $n$ and $(x,t), \, (y,s) \in S^{2} \times [0,T]$, $\Xi_{n} (x,t)$ is independent of $\Theta_{n} (y,s)$ since they are Gaussian and 
    \begin{align*}
        & \E \left[ \Xi_{n} (x,t) \Theta_{n} (y,s) \right]=0,
    \end{align*}    
    by \eqref{eqn.covariance.alm}.    
    The proof is completed once we show that 
    \begin{align}
        & \sum_{n=1}^{\infty} \mathbb{P} \left( \phi (r_{n}) \max_{(y,s)\in B_{r_{n}} (x,t)} \vert  \Xi_{n} (y,s)- \Xi_{n} (x,t)  \vert \leqslant \gamma \right) = \infty , \quad \text{ for any } \gamma > A_{2}^{p},  \label{eqn.almost.there.1}
        \\
        & \sum_{n=1}^{\infty} \mathbb{P} \left( \phi (r_{n}) \max_{(y,s)\in B_{r_{n}} (x,t)} \vert \Theta_{n} (y,s)  - \Theta_{n} (x,t)\vert > \varepsilon \right) < \infty , \quad \text{ for any } \varepsilon >0. \label{eqn.almost.there.2}
    \end{align}
    Indeed, by Borel-Cantelli Lemmas and \eqref{eqn.almost.there.1} \eqref{eqn.almost.there.2} it follows that 
    \begin{align*}
       &  \liminf_{r\rightarrow 0} \phi (r) M_{r} (x,t) \leqslant \liminf_{n\rightarrow \infty } \phi (r_{n}) M_{r_{n}} (x,t)  \leqslant  \liminf_{n\rightarrow \infty } \phi (r_{n})  \max_{(y,s)\in B_{r_{n}} (x,t)} \vert \Xi_{n} (y,s) -  \Xi_{n} (x,t)  \vert 
       \\
       & + \liminf_{n\rightarrow \infty } \phi (r_{n})  \max_{(y,s)\in B_{r_{n}} (x,t)} \vert \Theta_{n} (y,s) - \Theta_{n} (x,t) \vert \leqslant \gamma + \varepsilon,
    \end{align*}
    for any $\gamma > A_{2}^{p}$ and $\varepsilon >0$. The proof is then completed by letting $\varepsilon$ go to zero and $\gamma$ to $A_{2}^{p}$. 
    
    Let us first prove \eqref{eqn.almost.there.1}. By Anderson inequality \cite{Anderson} and Proposition \ref{thm.small.ball.estimate.lower.bound} it follows that 
    \begin{align*}
         & \mathbb{P} \left( \phi (r_{n}) \max_{(y,s)\in B_{r_{n}} (x,t)} \vert \Xi_{n} (y,s) -  \Xi_{n} (x,t) \vert \leqslant \gamma \right)   
         \\
         & \geqslant   \mathbb{P} \left( \phi (r_{n}) \max_{(y,s)\in B_{r_{n}} (x,t)} \vert \Xi_{n} (y,s) -  \Xi_{n} (x,t) +  \Theta_{n}(y,s) - \Theta_{n} (x,t)  \vert \leqslant \gamma \right) 
         \\
         & =  \mathbb{P} \left( \phi (r_{n}) M_{r_{n}} (x) \leqslant \gamma \right) \geqslant \exp \left( - A_{2} \psi \left( r_{n} , \frac{\gamma}{\phi(r_{n})} \right)\right) 
         \\
         & = \exp \left( -A_{2} \gamma^{- \frac{1}{p}} \log | \log r_{n} | \right)= \left( \frac{1}{\vert \log r_{n} \vert} \right)^{A_{2} \gamma^{- \frac{1}{p}}},
    \end{align*}
    and \eqref{eqn.almost.there.1} follows by  Borel-Cantelli Lemma for independent events since $\gamma > A_{2}^{p}$.
    We  now prove \eqref{eqn.almost.there.2} by means of \eqref{lemma.talagrand.diameter} with $K= B_{r} (x,t)$ and $X =\Theta_{n}$. 
    First, let us bound the diameter $D$ of $B_{r}(x,t)$ with respect to the canonical metric  $d_{\Theta_{n}}$ induced by the field $\Theta_{n}$.
    We have that 
    \begin{align*}
        &\mathbb{E} \left[ ( \Theta_{n} (y,s) - \Theta_{n} (x,t) )^{2} \right] 
        \\
        & =  \sum_{\ell = 1}^{d_{n}}  \frac{2\ell +1}{2\pi} \left( C_{\ell} (0) - C_{\ell} (t-s)  P_{\ell} (\cos \theta) \right)  +  \sum_{\ell = d_{n+1}+1}^{\infty} \frac{2\ell +1}{2\pi} \left( C_{\ell} (0) - C_{\ell} (t-s)  P_{\ell} (\cos \theta) \right)  
        \\
        & = \vert t-s \vert^{\beta} \sum_{\ell = 1}^{d_{n}}  \frac{2\ell +1}{2\pi} C_{\ell} (0) + (1-  \vert t-s \vert^{\beta})  \sum_{\ell = 1}^{d_{n}}  \frac{2\ell +1}{2\pi} C_{\ell} (0) \left( 1-  P_{\ell} (\cos \theta)\right) 
        \\
        & + \vert t-s \vert^{\beta} \sum_{\ell = d_{n+1}+1}^{\infty}  \frac{2\ell +1}{2\pi} C_{\ell} (0) + (1-  \vert t-s \vert^{\beta})  \sum_{\ell = d_{n+1}+1}^{\infty}  \frac{2\ell +1}{2\pi} C_{\ell} (0) \left( 1-  P_{\ell} (\cos \theta)\right) ,
    \end{align*}
    where $\theta:= d_{\mathbb S^{2}} (x,y)$. Let us set 
    \begin{align*}
        & S_{1} :=  \max_{(y,s) \in B_{r} (x,t)} \vert t-s \vert^{\beta} \sum_{\ell = 1}^{d_{n}}  \frac{2\ell +1}{2\pi} C_{\ell} (0),
        \\
        &  S_{2} :=  \max_{(y,s) \in B_{r} (x,t)} (1-  \vert t-s \vert^{\beta})  \sum_{\ell = 1}^{d_{n}}  \frac{2\ell +1}{2\pi} C_{\ell} (0) \left( 1-  P_{\ell} (\cos \theta)\right) ,
        \\
        & S_{3} :=  \max_{(y,s) \in B_{r} (x,t)} \vert t-s \vert^{\beta} \sum_{\ell = d_{n+1}+1}^{\infty}  \frac{2\ell +1}{2\pi} C_{\ell} (0) ,
        \\
        & S_{4} :=  \max_{(y,s) \in B_{r} (x,t)}  (1-  \vert t-s \vert^{\beta})  \sum_{\ell = d_{n+1}+1}^{\infty}  \frac{2\ell +1}{2\pi} C_{\ell} (0) \left( 1-  P_{\ell} (\cos \theta)\right) ,
    \end{align*}
    so that 
    \begin{align*}
        \max_{(y,s) \in B_{r} (x,t)}\mathbb{E} \left[ ( \Theta_{n} (y,s) - \Theta_{n} (x,t) )^{2} \right] \leqslant  S_{1} + S_{2} + S_{3} + S_{4}. 
    \end{align*}
    Note that $\vert P_{\ell} (\cos \theta) \vert \leqslant 1$ and hence, by Assumption I and Lemma \ref{lemma.taylor.expansion.Pl} 
    \begin{align*}
        & S_{1} \leqslant c \, r_{n}^{\beta}  \sum_{\ell = 1}^{d_{n}}\ell^{1- \alpha} \leqslant   c \, r_{n}^{\beta}  \sum_{\ell = 1}^{d_{n +1}}\ell^{1- \alpha} \leqslant c \,   r_{n}^{\beta} d_{n +1}^{2-\alpha} = c\,   r_{n}^{\beta} \left( \frac{1}{d_{n +1}}\right)^{\alpha -2} \leqslant c \, r_{n}^{ \alpha -2 +\beta} e^{-n (\alpha -2)} ,
        \\
        & S_{2} \leqslant c \, \max_{|s-t| < r_{n}} (1-|t-s|^{\beta}) \sum_{\ell = 1}^{d_{n}}\ell^{1- \alpha} \max_{y\in B_{r} (x)} (1- P_{\ell} (\cos \theta) )   \leqslant c \,  \sum_{\ell = 1}^{d_{n}} \ell^{1-\alpha} ( \ell^{2} r_{n}^{2} + \ell^{4} r_{n}^{4} ) 
        \\
        &  = c \, r_{n}^{2}   \sum_{\ell = 1}^{d_{n}} \ell^{3-\alpha} +   c \, r_{n}^{4}   \sum_{\ell = 1}^{d_{n}} \ell^{5-\alpha}  \leqslant c\, ( r_{n}^{2} d_{n}^{4-\alpha} +  r_{n}^{4} d_{n}^{6-\alpha}  ) = c \,r_{n}^{\alpha-2} \left( (r_{n} d_{n} )^{4-\alpha} +(r_{n} d_{n} )^{6-\alpha} \right)
        \\
        & = c \, r_{n}^{\alpha-2} \left( e^{-n(4-\alpha)} + e^{-n(6-\alpha)} \right) \leqslant  c \, r_{n}^{\alpha-2} e^{-n(4-\alpha)},
        \\
        & S_{3} \leqslant c\, r_{n}^{\beta} \sum_{d_{n+1} +1}^{\infty} \ell^{1-\alpha} \leqslant c_{\alpha} r_{n}^{\beta} \left( \frac{1}{d_{n+1}}\right)^{\alpha-2} \leqslant c_{\alpha} r_{n}^{ \alpha -2 +\beta} e^{-n (\alpha -2)} ,
        \\
        & S_{4} \leqslant c \, \max_{|s-t| < r_{n}} (1-|t-s|^{\beta}) \sum_{d_{n+1} +1}^{\infty} \ell^{1-\alpha} \leqslant c_{\alpha} \, r_{n}^{\alpha -2} e^{-n (\alpha -2)},
    \end{align*}
    for some finite constant $c_{\alpha} >0$. Combining  everything together it follows that 
    \begin{align*}
        & D^{2} \leqslant S_{1} + S_{2} + S_{3} + S_{4} \leqslant c_{\alpha} r_{n}^{\alpha -2} e^{-n \lambda},
    \end{align*}
    where $\lambda = \lambda (\alpha ) := \min ( \alpha -2, 4 - \alpha )$. If $N_{\varepsilon}$ denotes the number of $d_{T}$-balls of radius $\varepsilon$ needed to cover $B_{r_{n}}(x,t)$, then proceeding as in the proof of Proposition \ref{thm.small.ball.estimate.lower.bound} we have that 
    \[
    N_{\varepsilon} \leqslant c \frac{r^{3}}{\varepsilon^{\frac{1}{p}}},
    \]
    for some finite constant $c>0$. Thus, 
    \begin{align*}
        & \int_{0}^{D} \sqrt{\log N_{\varepsilon}} d\varepsilon \leqslant \int_{0}^{c_{\alpha} r_{n}^{\frac{\alpha-2}{2}} e^{-\frac{n}{2}\lambda} } \sqrt{\log c  \frac{r_{n}^{3}}{\varepsilon^{\frac{1}{p}}} } d\varepsilon
        \\
        & = c_{\alpha, \beta} r_{n}^{3p}\int_{0}^{a} w^{-q} \sqrt{-\log w} dw,
    \end{align*}
    where $a:= d_{\alpha ,\beta} e^{-\frac{n}{2p} \lambda }  r_{n}^{\frac{\alpha-2}{2p} -3}$, for some constant $d_{\alpha ,\beta}$, and $q= 1-p<1$. In order to apply  Lemma \ref{lemma.integral.bound} we need $a<1$, which is satisfied if  $\frac{\alpha-2}{2p} -3 >0$, that is, if $\alpha > 2 + \beta$. By Lemma \ref{lemma.integral.bound}  it then follows that 
    \begin{align}
        &    \int_{0}^{D} \sqrt{\log N_{\varepsilon}} d\varepsilon \leqslant  c_{\alpha ,\beta}r_{n}^{3p} a^{1-q} \sqrt{- \log a} \notag
        \\
        & = c_{\alpha ,\beta} d_{\alpha \beta}^{p} r_{n}^{3p}  e^{-\frac{n}{2} \lambda }  r_{n}^{\frac{\alpha-2}{2} -3p} \sqrt{\frac{n}{2}\lambda \frac{1}{p} - \log d_{\alpha , \beta} + ( n+ n^{2} ) \left( \frac{\alpha-2}{2p}-3 \right) }  \notag
        \\
        & \leqslant c_{\alpha \beta}  r_{n}^{\frac{\alpha-2}{2}}   e^{-\frac{n}{2} \lambda } n. \label{eqn.constant}
    \end{align}
    Set 
    \begin{align*}
        &     u:= \frac{\varepsilon}{c \phi(r_{n}) } - c_{\alpha \beta}  r_{n}^{\frac{\alpha-2}{2}}   e^{-\frac{n}{2} \lambda } n
    \end{align*}
    where $c$ and $c_{\alpha \beta}$ are the constants  given in  \eqref{lemma.talagrand.diameter} and  \eqref{eqn.constant} respectively. Note that  $u>0$ for all $n$ large enough since  
    \begin{align*}
         & u= \frac{\varepsilon}{c} \frac{r_{n}^{3p}}{( \log \vert \log r_{n} \vert  )^{p}} \left( 1 - \frac{c}{\varepsilon} \left( r_{n}^{\frac{\alpha-2}{\beta} -1} \log \vert \log r_{n} \vert \right)^{p} e^{-\frac{n}{2}\lambda} n\right) >0.
    \end{align*}
     Then by   \eqref{lemma.talagrand.diameter}
    \begin{align*}
        & \mathbb{P} \left( \phi (r_{n}) \max_{(y,s)\in B_{r_{n}} (x,t)} \vert \Theta_{n} (y,s) - \Theta_{n} (x,t)   \vert > \varepsilon \right)
        \\
        &\leqslant \mathbb{P} \left(  \max_{(z, v), (y,s)\in B_{r_{n}} (x,t)} \vert  \Theta_{n} (y,s) - \Theta_{n} (z,v)  \vert > \frac{\varepsilon}{\phi (r_{n})} \right)
        \\
        & = \mathbb{P} \left(  \max_{(z, v), (y,s)\in B_{r_{n}} (x,t)} \vert \Theta_{n} (y,s) - \Theta_{n} (z,v) \vert > c \left( u+  c_{\alpha \beta}  r_{n}^{\frac{\alpha-2}{2}}   e^{-\frac{n}{2} \lambda } n \right) \right)
        \\
        & \leqslant \mathbb{P} \left(   \max_{(z, v), (y,s)\in B_{r_{n}} (x,t)} \vert \Theta_{n} (y,s) - \Theta_{n} (z,v) \vert > c \left( u+  \int_{0}^{D} \sqrt{\log N_{\varepsilon}} d\varepsilon \right) \right)\leqslant \exp \left( - \frac{u^{2}}{D^{2}}\right).
    \end{align*}
    Note that for $n$ large 
    \[
    u =  \frac{1}{\phi(r_{n})}  \left( \frac{\varepsilon}{c  } -   \phi(r_{n}) c_{\alpha \beta}  r_{n}^{\frac{\alpha-2}{2}}   e^{-\frac{n}{2} \lambda } n \right) \geqslant \frac{b}{\phi(r_{n})},
    \]
    for some constant $ b = b (\alpha , \beta )$ since $\phi(r_{n}) c_{\alpha \beta}  r_{n}^{\frac{\alpha-2}{2}}   e^{-\frac{n}{2} \lambda } n \rightarrow 0$.  Thus 
    \begin{align*}
        & \mathbb{P} \left( \phi (r_{n}) \max_{(y,s)\in B_{r_{n}} (x,t)} \vert \Theta_{n} (y,s) - \Theta_{n} (x,t)   \vert > \varepsilon \right) \leqslant \exp \left(- \frac{ b^{2}}{\phi (r_{n})^{2}} \frac{1}{D^{2}}  \right)
        \\
        & \leqslant \left( - C \frac{e^{n\lambda}}{ (\log \vert \log r_{n} \vert)^{p} \, r_{n}^{\alpha-2-{6p}}  }\right),
    \end{align*}
    for some finite constant $C>0$, and the proof of \eqref{eqn.almost.there.2} is then complete. 
\end{proof}

\section{Small probabilities}
In this section we prove Theorem \ref{thm.small.deviations}, the small ball principle for the random field $\{T(x,t) , (x,t) \in \mathbb{S}^{2} \times [0,T] \}$, and we show that the random variable $\kappa_{\alpha ,\beta}$ is constant a.s. By Proposition \ref{thm.lower.bound} and Proposition \ref{thm.upper.bound} we know that, for any $(x,t) \in \mathbb S^{2} \times [0,T]$ 
\begin{align*}
    & \kappa_{\alpha ,\beta} = \liminf_{r\rightarrow 0} \phi_{\alpha , \beta}  (r) M_{r} (x,t) , \; \; \; a.s.
    \\
    &A_{1}^{p} \leqslant \kappa_{\alpha ,\beta}\leqslant  A_{2}^{p} \; \; \; a.s.
\end{align*}
where $ A_{1}$,  $ A_{2}$, and $p$ are given by  \eqref{eqn.small.ball.upper.bound},   \eqref{eqn.small.ball.lower.bound}, and \eqref{eqn.p.rate} respectively.

\begin{proof}[Proof of Theorem \ref{thm.small.deviations}]
    Let 
    \begin{align*}
        & c_{+} := \limsup_{\varepsilon \rightarrow 0} - \frac{1}{\psi(r,\varepsilon)} \log \mathbb{P} \left(M_{r} (x,t) < \varepsilon \right) ,
        \\
        & c_{-} := \liminf_{\varepsilon \rightarrow 0} - \frac{1}{\psi(r,\varepsilon)} \log \mathbb{P} \left( M_{r} (x,t) < \varepsilon \right) ,
    \end{align*}
    where $\psi = \psi_{\alpha , \beta}$ is given by \eqref{eqn.psi}. By Proposition \ref{thm.small.ball.estimate.lower.bound} and Proposition \ref{thm.small.ball.estimate.upper.bound} we know that 
    \[
    0<A_{1} \leqslant c_{-} \leqslant c_{+} \leqslant A_{2} < \infty .
    \]
    The strategy of the proof is to show that 
    \begin{align*}
        c_{+} \leqslant   \kappa_{\alpha ,\beta }^{\frac{1}{p}} \leqslant c_{-} \; \; \; a.s.
    \end{align*}
    and in particular
    \begin{align*}
        & \kappa_{\alpha ,\beta }^{\frac{1}{p}} = c_{-} = c_{+} = \lim_{\varepsilon \rightarrow 0} - \frac{1}{\psi(r,\varepsilon)} \log \mathbb{P} \left( M_{r} (x,t) < \varepsilon \right)
    \end{align*}
    is constant. We first prove that $c_{+} \leqslant   \kappa_{\alpha ,\beta }^{\frac{1}{p}}$ a.s. Let us fix $\delta \in (0,c_{+})$. Then there exists an $\varepsilon (\delta)$ such that 
    \begin{equation}\label{eqn.sd.upper.bound}
            \mathbb{P} \left(M_{r} (x,t) < \varepsilon \right)  \leqslant \exp \left( - \delta \psi (r,\varepsilon) \right),
    \end{equation}
    for any $\varepsilon \leqslant \varepsilon (\delta)$. Let $r_{n} := R^{-n}$ for some $R>1$ and let us choose $\gamma$ such that $0<R^{3} \gamma < \delta$. Then 
    \begin{align*}
        & \mathbb{P} \left(M_{r_{n}}  (x,t) < \frac{\gamma^{p}}{\phi(r_{n-1})} \right) = \mathbb{P} \left( \max_{(y,s)\in B_{r_{n}} (x,t)} \vert T(y,s) - T(x,t) \vert < \frac{\gamma^{p}}{\phi(r_{n-1})} \right) 
        \\
        & \leqslant \exp \left( -\delta \psi  \left( r_{n} ,  \frac{\gamma^{p}}{\phi(r_{n-1})} \right) \right) = \exp \left( -\frac{\delta}{R^{3} \gamma} \log ( (n-1) \log R ) \right) = \left( \frac{1}{(n-1) \log R} \right)^{\frac{\delta}{R^{3} \gamma}},
    \end{align*}
    and hence 
    \begin{align*}
        & \sum_{n=1}^{\infty} \mathbb{P} \left(M_{r_{n}}  (x,t) < \frac{\gamma^{p}}{\phi(r_{n-1})}  \right)  < \infty,
    \end{align*}
    since $0<R^{3} \gamma < \delta$. Thus by Borel-Cantelli Lemma we have that 
    \begin{align*}
        & \mathbb{P} \left( \bigcap_{k \geqslant 1} \bigcup_{n\geqslant k} \left\{ M_{r_{n}}  (x,t) < \frac{\gamma^{p}}{\phi(r_{n-1})}  \right\} \right) =0 , \text{ i.e.} 
        \\
        & \mathbb{P} \left( \bigcup_{k \geqslant 1} \bigcap_{n\geqslant k} \left\{ M_{r_{n}}  (x,t) \geqslant \frac{\gamma^{p}}{\phi(r_{n-1})}  \right\} \right) =1,
    \end{align*}
    and hence almost surely for all large $n$ 
    \[
    M_{r_{n}}  (x,t) \geqslant \frac{\gamma^{p}}{\phi(r_{n-1})}  . 
    \]
    Note that $\phi(r) > \phi (r_{n-1})$  for $r_{n} \leqslant r \leqslant r_{n-1}$,  and it then follows that  
    \begin{align*}
        & M_{r}  (x,t) \geqslant M_{r_{n}}  (x,t) \geqslant \frac{\gamma^{p}}{\phi(r_{n-1})} \geqslant \frac{\gamma^{p}}{\phi(r)},
    \end{align*}
    which yields  
    \begin{align*}
        \kappa_{\alpha , \beta} := \liminf_{r\rightarrow 0} \phi(r)  M_{r}  (x,t) \geqslant  \gamma^{p} \; a.s.
    \end{align*}
    for any $\gamma < \frac{\delta}{R^{3}} < \frac{c_{+}}{R^{3}}$, and hence $c_{+} \leqslant   \kappa_{\alpha ,\beta }^{\frac{1}{p}}$ a.s. by letting first $R$ go to 1, and then $\delta$ to $c_{+}$. 

    Let us now prove that $\kappa_{\alpha ,\beta }^{\frac{1}{p}}   \leqslant c_{-}$ a.s. Let $\delta>c_{-}$ be fixed. Then there exists an $\varepsilon(\delta)$ such that 
    \begin{align*}
            \mathbb{P} \left(M_{r} (x,t) < \varepsilon \right)  \geqslant \exp \left( -\delta \psi  (r,\varepsilon) \right),
    \end{align*}
    for any $\varepsilon \leqslant \varepsilon (\delta)$. Let $r_{n}$ , $\Xi_{n} (x,t) $, and $\Theta_{n} (x,t)$ be defined as in the proof of Proposition \ref{thm.upper.bound}. Then 
    \begin{align*}
        & \mathbb{P} \left( \phi (r_{n} ) \max_{(y,s)\in B_{r_{n}} (x,t)} \vert \Xi_{n} (y,s)- \Xi_{n} (x,t) \vert \leqslant \delta \, c_{-}^{p-1} \right) 
        \\
        & \geqslant \mathbb{P} \left( \max_{(y,s)\in B_{r_{n}} (x,t)} \vert \Xi_{n} (y,s)- \Xi_{n} (x,t) \vert \leqslant   \frac{\delta \, c_{-}^{p-1}}{\phi (r_{n} )} \right)
        \\ 
        & \geqslant \exp \left( -\delta \psi \left( r_{n} ,   \frac{\delta \, c_{-}^{p-1}}{\phi (r_{n} )} \right) \right) = \exp \left( - \left(\frac{c_{-}}{\delta} \right)^{\frac{1-p}{p}}  \log \vert \log r_{n} \vert  \right)  = \left( \frac{1}{ \vert \log r_{n} \vert } \right)^{a},
    \end{align*}
    where $a:= \left(\frac{c_{-}}{\delta} \right)^{\frac{1-p}{p}} <1$ since $\delta> c_{-}$. The events 
    \[
    \left\{  \phi (r_{n} ) \max_{(y,s)\in B_{r_{n}} (x,t)} \vert \Xi_{n} (y,s)- \Xi_{n} (x,t) \vert \leqslant \delta \, c_{-}^{p-1} \right\}
    \]
    are independent, and hence by Borel-Cantelli Lemma for independent events it follows that 
    \[
    \liminf_{n\rightarrow \infty}  \phi (r_{n} ) \max_{(y,s)\in B_{r_{n}} (x,t)} \vert \Xi_{n} (y,s)- \Xi_{n} (x,t) \vert \leqslant  \delta \,  c_{-}^{p-1}  \, a.s. \, 
    \]
    for any  $\delta> c_{-}$, and thus, 
    \[
    \liminf_{n\rightarrow \infty}  \phi (r_{n} ) \max_{(y,s)\in B_{r_{n}} (x,t)} \vert \Xi_{n} (y,s)- \Xi_{n} (x,t) \vert \leqslant  \delta \, c_{-}^{p-1}  \leqslant c_{-}^{p} \; a.s. 
    \]
    Note that by \eqref{eqn.almost.there.2} we have that 
    \begin{align*}
        & \kappa_{\alpha ,\beta} := \liminf_{r \rightarrow 0}   \phi (r )M_{r} (x,t) \leqslant  \liminf_{n\rightarrow \infty}   \phi (r_{n} )M_{r_{n}}(x,t)
        \\
        & \leqslant  \liminf_{n\rightarrow \infty } \phi (r_{n})  \max_{(y,s)\in B_{r_{n}} (x,t)} \vert \Xi_{n} (y,s) -  \Xi_{n} (x,t)  \vert +  \liminf_{n\rightarrow \infty } \phi(r_{n})  \max_{(y,s)\in B_{r_{n}} (x,t)} \vert \Theta_{n} (y,s) - \Theta_{n} (x,t) \vert
        \\
        & \leqslant  c_{-}^{p} + \varepsilon
    \end{align*}
    for any $\varepsilon>0$, and hence the result follows by letting $\varepsilon$ go to zero.
\end{proof}

\appendix
\section{Proof of Proposition \ref{thm.non.local.deter} }\label{sec:appendixA}
In this section we prove Proposition \ref{thm.non.local.deter}, i.e. the strong non-local determinism for this field. We work in spherical coordinates $(\theta,\varphi)$ and we take without loss of generality $x_0=(0,0)$ to be the North Pole and $x_j=(\theta_j,\varphi_j)$ so that $d_{\mathbb{S}^2}(x_0,x_j)=\theta_j$. Since $T(x,t)$ is a Gaussian random field, 
 it is known that
\begin{equation*} Var(T(x_0,t_0)|T(x_1,t_1),...T(x_n,t_n))=\inf \{ \mathbb{E}[(T(x_0,t_0)-\sum_{j=1}^{n} \gamma_j T(x_j,t_j))^2]:\gamma_1,\dots,\gamma_n \in \mathbb{R}\}.
\end{equation*}
We observe that
    \begin{eqnarray*}
        \mathbb{E}\left[ \left( T(x,t)-\sum_{j=1}^{n} \gamma_j T(x_j,t_j)\right)^2\right]=\Gamma(x,t,x,t)-2\sum_{j=1}^{n} \gamma_j\Gamma(x,t,x_j,t_j)+\sum_{j,i=1}^{n} \gamma_i\gamma_j \Gamma(x_j,t_j,x_i,t_i) .
    \end{eqnarray*}
Indeed,
\begin{align*}
      &  \mathbb{E}\left[ \left( T(x,t)-\sum_{j=1}^{n} \gamma_j T(x_j,t_j)\right)^2\right] =  \mathbb{E}\left[ \left( \sum_{\ell =1}^{\infty} \sum_{m=- \ell}^{\ell} a_{\ell m}(t)Y_{\ell m}(x)-\sum_{\ell =1}^{\infty} \sum_{m=- \ell}^{\ell}\sum_{j=1}^{n}  \gamma_j a_{\ell m}(t_j)Y_{\ell m}(x_j) \right)^2\right]
      \\
        &= \sum_{\ell, \ell^{\prime} =1}^{\infty} \sum_{m =- \ell}^{\ell}   \sum_{m^{\prime}= -\ell^{\prime}}^{\ell^{\prime}} \mathbb{E}[a_{\ell m}(t)a_{\ell'm'}(t)] Y_{\ell m}(x) Y_{\ell'm'}(x) 
        \cr
        & \qquad - 2\sum_{\ell, \ell^{\prime} =1}^{\infty} \sum_{m =- \ell}^{\ell}   \sum_{m^{\prime}= -\ell^{\prime}}^{\ell^{\prime}}  \sum_{j=1}^{n} \gamma_j  \mathbb{E}[a_{\ell m}(t)a_{\ell'm'}(t_j)] Y_{\ell m}(x)Y_{\ell'm'}(x_j)
        \cr 
        & \qquad +\sum_{\ell, \ell^{\prime} =1}^{\infty} \sum_{m =- \ell}^{\ell}   \sum_{m^{\prime}= -\ell^{\prime}}^{\ell^{\prime}}  \sum_{j,j^{\prime} =1}^{n} \gamma_j \gamma_{j'}   \mathbb{E}[a_{\ell m}(t_j)a_{\ell'm'}(t_{j'})] Y_{\ell m}(x_j)Y_{\ell'm'}(x_{j'})
            \end{align*}
    In view of (\ref{eqn.covariance.alm}) we obtain
            
    \begin{align*}
      &  \mathbb{E}\left[ \left( T(x,t)-\sum_{j=1}^{n} \gamma_j T(x_j,t_j)\right)^2\right]
      \cr &   =\sum_{\ell =1}^{\infty} \sum_{m =- \ell}^{\ell}   C_\ell(0) Y_{\ell m}(x) Y_{\ell m}(x) - 2\sum_{\ell =1}^{\infty} \sum_{m =- \ell}^{\ell}  \sum_{j=1}^{n}   \gamma_j C_\ell(t-t_j) Y_{\ell m}(x)Y_{\ell'm'}(x_j)
        \\
        & \qquad + \sum_{\ell =1}^{\infty} \sum_{m =- \ell}^{\ell}  \sum_{ j,j^{\prime}=1}^{n} \gamma_j \gamma_{j'}  C_\ell(t_j-t_{j'}) Y_{\ell m}(x_j)Y_{\ell'm'}(x_{j'})
        \\
        &  =\sum_{\ell=1 }^{\infty} C_\ell(0) \frac{2\ell+1}{4\pi}P_\ell(0) - 2\sum_{\ell =1}^{\infty}\sum_{j=1}^{n}  \gamma_j C_\ell(t-t_j)  \frac{2\ell+1}{4\pi}P_\ell(d_{\mathbb{S}^2}(x,x_j))
        \\
        & \qquad+\sum_{\ell =1}^{\infty}\sum_{j, j^{\prime}=1}^{n}  \gamma_j \gamma_{j'}  C_\ell(t_j-t_{j'}) \frac{2\ell+1}{4\pi}P_\ell(d_{\mathbb{S}^2}(x_j,x_{j'})).
    \end{align*}
Then we write
  \begin{align*}
        &\mathbb{E}\left[ \left( T(x,t)-\sum_{j=1}^{n} \gamma_j T(x_j,t_j)\right)^2\right]=
       \sum_{\ell=0}^{\infty} \frac{2\ell+1}{4\pi} C_\ell(0) \bigg[ P_\ell(0)  -2\sum_{j=1}^{n} \gamma_j \frac{C_\ell(t-t_j)}{C_\ell(0)} P_\ell(d(x,x_j))
       \cr
       & \quad +  \sum_{j, j^{\prime}=1}^{n} \gamma_j\gamma_{j'}\frac{C_\ell(t_j-t_{j'})}{C_\ell(0)} P_\ell(d(x_{j'},x_j)) \bigg]
       \cr
       &=  \sum_{\ell=0}^{\infty} \frac{2\ell+1}{4\pi} C_\ell(0) 
       \bigg[ \left(1-\sum_{j=1}^{n} \gamma_j \frac{C_\ell(t-t_j)}{C_\ell(0)} P_\ell(d(x,x_j))\right)^2 \\& \quad  + 
       \sum_{j,j^{\prime} =1}^{n} \gamma_j\gamma_{j'} \bigg( \frac{C_\ell(t_j-t_{j'})}{C_\ell(0)} P_\ell(d(x_{j'},x_j))
       -\frac{C_\ell(t_j-t)C_\ell(t-t_{j'})}{C_\ell(0)C_\ell(0)}P_\ell(d(x,x_j))P_\ell(d(x,x_{j'}))\bigg) \bigg].
    \end{align*}

From Lemma \ref{lem:POSDEF} it follows that

\begin{eqnarray*}
        &&\mathbb{E}\left[ \left(T(x,t)-\sum_{j=1}^{n} \gamma_j T(x_j,t_j)\right)^2\right] \geq 
 \sum_{\ell=0}^{\infty} \frac{2\ell+1}{4\pi} C_\ell(0) 
 \left(1-\sum_{j=1}^{n} \gamma_j \frac{C_\ell(t-t_j)}{C_\ell(0)} P_\ell(d(x,x_j))\right)^2.
    \end{eqnarray*}

The thesis follows by proving the proposition below.

\begin{prop}
Assume condition I holds. For all $\varepsilon \in (0, \varepsilon_0/2]$ there exists a constant $C>0$ such that for all choices of $n\in \mathbb{N}$, all $(\theta_j, \varphi_j): \theta_j >\varepsilon$ and $\gamma_j \in \mathbb{R}$, $j=1,\dots,n$ we have 
\begin{eqnarray*}
&& \sum_{\ell=0}^{\infty} \frac{2\ell+1}{4\pi} C_\ell(0) 
 \left(1-\sum_{j=1}^{n} \gamma_j \frac{C_\ell(t-t_j)}{C_\ell(0)} P_\ell(d(x,x_j))\right)^2 \geq C \min_{1\leq k\leq n} \rho_{\alpha}^{2} (d_{\mathbb S^{2}} (x,x_k) )).
    \end{eqnarray*}
    \end{prop}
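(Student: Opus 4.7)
The plan is to prove the lower bound via a Cauchy--Schwarz / duality argument with a carefully chosen zonal spatial test function, exploiting the factorization $C_\ell(\tau)/C_\ell(0) = 1 - |\tau|^\beta$ for $|\tau| < 1$ from Assumption I. Set $r := \min_{1\leq k\leq n} \theta_k > \varepsilon > 0$, write $w_\ell := \frac{2\ell+1}{4\pi}C_\ell(0)$, and note $\rho_\alpha^2(r) = r^{\alpha-2}$. I will find a sequence $(b_\ell)_\ell$ such that $\sum_\ell w_\ell b_\ell(1 - S_\ell) = 1$ independently of $\gamma$, where $S_\ell = \sum_j \gamma_j \frac{C_\ell(t-t_j)}{C_\ell(0)} P_\ell(\cos\theta_j)$, while $\sum_\ell w_\ell b_\ell^2 \le C r^{2-\alpha}$; Cauchy--Schwarz in $\ell^2(w_\ell)$ will then yield the proposition.

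First I would fix a smooth cutoff $\eta \in C_c^\infty([0,\infty))$ with $\eta(0) = 1$ and $\mathrm{supp}\,\eta \subset [0,1/2]$, and introduce the zonal bump $\tilde\phi(y) := \eta(d_{\mathbb{S}^2}(y,x_0)/r)$. By construction $\tilde\phi(x_0) = 1$ and $\tilde\phi(x_k) = 0$ for every $k$, since $\theta_k \ge r > r/2$. Writing its zonal Legendre expansion $\tilde\phi(y) = \sum_\ell \tilde b_\ell \frac{2\ell+1}{4\pi} P_\ell(\cos d(y, x_0))$ and setting $b_\ell := \tilde b_\ell / C_\ell(0)$, one gets $b_\ell w_\ell = \tilde b_\ell \frac{2\ell+1}{4\pi}$ and hence $\sum_\ell b_\ell w_\ell P_\ell(\cos d(y, x_0)) = \tilde\phi(y)$. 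The cross term collapses via Assumption I:
\[
\sum_\ell w_\ell b_\ell S_\ell \;=\; \sum_{j=1}^n \gamma_j\bigl(1 - |t - t_j|^\beta\bigr) \tilde\phi(x_j) \;=\; 0,
\]
so that $\sum_\ell w_\ell b_\ell(1 - S_\ell) = \tilde\phi(x_0) = 1$.

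Cauchy--Schwarz in $\ell^2(w_\ell)$ then gives
\[
\sum_{\ell=0}^\infty w_\ell (1 - S_\ell)^2 \;\geq\; \frac{\bigl(\sum_\ell w_\ell b_\ell(1 - S_\ell)\bigr)^2}{\sum_\ell w_\ell b_\ell^2} \;=\; \frac{1}{\sum_\ell w_\ell b_\ell^2},
\]
and I am reduced to bounding $\sum_\ell w_\ell b_\ell^2 = \sum_\ell \frac{(2\ell+1) \tilde b_\ell^2}{4\pi\, C_\ell(0)}$ from above. Using $C_\ell(0) \ge c_0^{-1} \ell^{-\alpha}$ from Assumption I, this quantity is $\lesssim \sum_\ell \ell^\alpha (2\ell+1) \tilde b_\ell^2$, which up to constants equals $\|\tilde\phi\|_{H^{\alpha/2}(\mathbb{S}^2)}^2$ via the addition formula. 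The standard Sobolev scaling for a bump of fixed profile and width $r$ on a two-dimensional manifold gives $\|\tilde\phi\|_{H^{\alpha/2}}^2 \asymp r^{2-\alpha}$, and combining the estimates yields $\sum_\ell w_\ell (1-S_\ell)^2 \geq c\, r^{\alpha-2} = c\,\rho_\alpha^2(r)$.

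The hard part will be the Sobolev scaling estimate for the rescaled zonal bump on $\mathbb{S}^2$: in Euclidean space it is a routine change of variable in Fourier coordinates, but on the sphere one must either work in normal coordinates near $x_0$ (valid once one takes $\varepsilon \le \varepsilon_0/2$ small enough) or estimate the zonal Legendre coefficients $\tilde b_\ell$ of the radial profile $\eta(\cdot/r)$ directly, e.g.\ by integration by parts in the radial variable. A minor caveat is that the factorization $C_\ell(\tau) = C_\ell(0)(1-|\tau|^\beta)$ requires $|t - t_j| < 1$; this is automatic in the small-ball applications of the following sections, where only $|t - t_j| \le r \ll 1$ appears.
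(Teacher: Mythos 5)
Your proposal is correct and takes essentially the same route as the paper: the paper's proof is also a Cauchy--Schwarz duality against a zonal bump vanishing at the points $x_j$, with the cross term annihilated exactly as in your computation via $C_\ell(\tau)=C_\ell(0)(1-|\tau|^\beta)$, followed by a bound on the weighted coefficient sum $\sum_\ell \frac{2\ell+1}{4\pi}\,b_\ell^2/C_\ell(0)$. The only difference is cosmetic: the paper uses the ready-made bump $\delta_\varepsilon$ of Lan--Marinucci--Xiao (so $I\gtrsim \varepsilon^{-2}$ and the coefficient bound $O(\varepsilon^{-(\alpha+2)})$ are quoted from their Proposition 7), while you build your own bump normalized to $1$ at the center and recast the same coefficient estimate as the Sobolev scaling $\|\tilde\phi\|_{H^{\alpha/2}}^2\lesssim r^{2-\alpha}$, which you leave sketched much as the paper leaves it to the citation.
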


\begin{proof} 
For any fixed $\varepsilon>0$, let $\delta_\varepsilon(\cdot,\cdot)$ the spherical bump function constructed in \cite[Section 2.2]{MXL} with the corresponding coefficients $\{ b_{ \ell} (\varepsilon)\}$. We follow the proof of \cite[Proposition 7]{MXL}.
We define 
\[I:= \sum_{\ell=0}^{\infty} \sqrt{\frac{2\ell+1}{4\pi C_\ell(0)}}b_\ell(\varepsilon) \left[ \sqrt{\frac{2\ell+1}{4\pi}C_\ell(0)} \left( 1-\sum_{j=1}^{n}\gamma_j \frac{C_\ell(t-t_j)}{C_\ell(0)} P_\ell(d(x,x_j)) \right) \right].\]
By Cauchy-Schwarz 
\[I^2\leq \sum_{\ell=0}^{\infty} \frac{2\ell+1}{4\pi} \frac{b_\ell(\varepsilon)^2 }{C_\ell(0)}\sum_{\ell=0}^{\infty} \frac{2\ell+1}{4\pi} C_\ell(0) 
 \left(1-\sum_{j=1}^{n} \gamma_j \frac{C_\ell(t-t_j)}{C_\ell(0)} P_\ell(d(x,x_j))\right)^2  \]
 From which it follows that
 \[\sum_{\ell=0}^{\infty} \frac{2\ell+1}{4\pi} C_\ell(0) 
 \left(1-\sum_{j=1}^{n} \gamma_j \frac{C_\ell(t-t_j)}{C_\ell(0)} P_\ell(d(x,x_j))\right)^2 \geq \frac{I^2}{\sum_{\ell=0}^{\infty} \frac{b_\ell(\varepsilon)^2}{C_\ell(0)} \frac{2\ell+1}{4\pi}}.\]

 Let us compute I now. We have that
 \[
 I= \sum_{\ell-0}^{\infty} b_\ell(\varepsilon)\frac{2\ell+1}{4\pi} -\sum_{\ell=0}^{\infty} b_\ell(\varepsilon)\frac{2\ell+1}{4\pi} \sum_{j=1}^{n} \gamma_j \frac{C_\ell(t-t_j)}{C_\ell(0)} P_\ell(d(x,x_j)).
 \]
From \cite{MXL} we know that
 \[\sum_{\ell=0}^{\infty} b_\ell(\varepsilon)\frac{2\ell+1}{4\pi} = \delta_{\varepsilon}(0,0) \geq \frac{c'}{2\varepsilon^2}\]
 and $C_\ell(t-t_j)=(1-|t-t_j|^\beta)C_\ell(0)$ since  $|t-t_j|<1$, and hence 

\begin{eqnarray*}
   && \sum_{\ell=0}^{\infty} b_\ell(\varepsilon)\frac{2\ell+1}{4\pi} \sum_{j=1}^{n} \gamma_j \frac{C_\ell(t-t_j)}{C_\ell(0)} P_\ell(d(x,x_j))\\
    &&\quad=\sum_{j=1}^{n} \gamma_j (1-|t-t_j|^\beta)\sum_{\ell=0}^{\infty} b_\ell(\varepsilon)\frac{2\ell+1}{4\pi} P_\ell(d(x,x_j))\\&&
   \quad  = \sum_{j=1}^{n} \gamma_j (1-|t-t_j|^\beta) \delta_\varepsilon(\theta_j,\phi_j)=0.
\end{eqnarray*}
Then we have $I\geq \frac{c'}{2}\varepsilon^{-2}$. It remains to prove that 
\[\sum_{\ell=0}^{\infty} \frac{b_\ell(\varepsilon)^2}{C_\ell(0)} \frac{2\ell+1}{4\pi}=O(\varepsilon^{-(\alpha+2)}),\] which follows  from the proof of \cite[Proposition 7]{MXL} in view of Assumption I.
\end{proof}

\section{Technical Lemmas}
We collect here other technical lemmas exploited along the paper.

\begin{lem}\label{lem:POSDEF} Assume condition I. For any $(x_j, t_j)$ and $(x_{j'},t_{j'})$  points on $\mathbb{S}^2\times [0,T]$ and for any $\gamma_j \in \mathbb{R}$, $j=1,\dots, n$ we have that
\begin{align*}
\sum_{\ell=0}^{\infty} &\frac{2\ell+1}{4\pi} C_\ell(0)\\& \times
 \sum_{j,j^{\prime}=1}^{n} \gamma_j\gamma_{j'} \bigg( \frac{C_\ell(t_j-t_{j'})}{C_\ell(0)} P_\ell(d(x_{j'},x_j)) -\frac{C_\ell(t_j-t)C_\ell(t-t_{j'})}{C_\ell(0)C_\ell(0)}P_\ell(d(x,x_j))P_\ell(d(x,x_{j'})\bigg) \geq 0
\end{align*}
\end{lem}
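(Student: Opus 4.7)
The strategy is to show that the nonnegativity holds termwise in $\ell$, i.e., that for every fixed $\ell\geqslant 0$ the inner double sum in $j,j'$ is already nonnegative; summing over $\ell$ then gives the result. The expression in square brackets is essentially a Schur complement with respect to the single observation $T_\ell(x,t)$ at frequency $\ell$, so the natural tool is the Cauchy--Schwarz inequality applied to Gaussian projections.

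More precisely, first I would introduce the frequency-$\ell$ component $T_\ell(x,t):=\sum_{m=-\ell}^{\ell} a_{\ell m}(t) Y_{\ell m}(x)$. From \eqref{eqn.covariance.alm} and the addition formula \eqref{addition} one has, for any $(x,t),(y,s)$,
\[
\E[T_\ell(x,t)\,T_\ell(y,s)]=\frac{2\ell+1}{4\pi}\,C_\ell(t-s)\,P_\ell(\langle x,y\rangle).
\]
Next I would form the two centered Gaussian variables
\[
U_\ell:=\sum_{j=1}^n \gamma_j\, T_\ell(x_j,t_j),\qquad V_\ell:=T_\ell(x,t),
\]
and compute
\[
\E[U_\ell^2]=\frac{2\ell+1}{4\pi}\sum_{j,j'=1}^n \gamma_j\gamma_{j'} C_\ell(t_j-t_{j'})P_\ell(d(x_j,x_{j'})),
\]
\[
\E[U_\ell V_\ell]=\frac{2\ell+1}{4\pi}\sum_{j=1}^n \gamma_j C_\ell(t-t_j)P_\ell(d(x,x_j)),\qquad \E[V_\ell^2]=\frac{2\ell+1}{4\pi}C_\ell(0).
\]

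The Cauchy--Schwarz inequality (equivalently, nonnegativity of the residual variance $\E[(U_\ell-\alpha V_\ell)^2]$ minimized over $\alpha\in\R$) yields
\[
\E[U_\ell^2]-\frac{\E[U_\ell V_\ell]^2}{\E[V_\ell^2]}\geqslant 0,
\]
and a direct algebraic rearrangement of this inequality, factoring out $\tfrac{2\ell+1}{4\pi}C_\ell(0)$, gives exactly
\[
\frac{2\ell+1}{4\pi}C_\ell(0)\sum_{j,j'=1}^n\gamma_j\gamma_{j'}\!\left[\frac{C_\ell(t_j-t_{j'})}{C_\ell(0)}P_\ell(d(x_j,x_{j'}))-\frac{C_\ell(t_j-t)C_\ell(t-t_{j'})}{C_\ell(0)C_\ell(0)}P_\ell(d(x,x_j))P_\ell(d(x,x_{j'}))\right]\geqslant 0.
\]
Summing this nonnegative quantity over $\ell\geqslant 0$ gives the claim. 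The only routine step is the algebraic identification, and there is no genuine obstacle: the statement is essentially the observation that conditional variances at each frequency are nonnegative. The case $C_\ell(0)=0$ is trivial (the corresponding term drops out), and $C_0(0)=c_1>0$ together with \eqref{condition.power.spectrum} ensures $C_\ell(0)>0$ for all $\ell$ so that no division by zero occurs.
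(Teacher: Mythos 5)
Your proof is correct and follows essentially the same route as the paper: both arguments work frequency by frequency, recognizing the bracketed expression as (a positive multiple of) the Schur complement of the covariance of the degree-$\ell$ field $T_\ell(x,t)=\sum_m a_{\ell m}(t)Y_{\ell m}(x)$ at the point $(x,t)$, and then summing the resulting nonnegative terms over $\ell$. The only difference is cosmetic: where the paper invokes Proposition 1 of \cite{LuMaXi} to assert that $C_\ell(x,x,t,t)C_\ell(x_1,x_2,t_1,t_2)-C_\ell(x,x_1,t,t_1)C_\ell(x,x_2,t,t_2)$ is again positive definite, you prove exactly that fact directly via Cauchy--Schwarz (nonnegativity of the residual variance of $U_\ell$ after projecting onto $V_\ell$), using $C_\ell(0)>0$ from Assumption I to avoid division by zero, which makes your version self-contained.
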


\begin{proof}
   Note that
\begin{align*}
  & \frac{2\ell+1}{4\pi} C_\ell(0) 
 \bigg( \frac{C_\ell(t_j-t_{j'})}{C_\ell(0)} P_\ell(d(x_{j'},x_j)) -\frac{C_\ell(t_j-t)C_\ell(t-t_{j'})}{C_\ell(0)C_\ell(0)}P_\ell(d(x,x_j))P_\ell(d(x,x_{j'})\bigg)  
 \\
 & =\frac{4\pi}{(2\ell+1)C_\ell(0)} \bigg(C_\ell(0) \frac{2\ell+1}{4\pi} P_\ell(0) C_\ell(t_j-t_{j'})\frac{2\ell+1}{4\pi}P_\ell(d(x_{j'},x_j))
 \\
 & -C_\ell(t_j-t)P_\ell(d(x,x_j)) \frac{2\ell+1}{4\pi} C_\ell(t-t_{j'})\frac{2\ell+1}{4\pi}P_\ell(d(x,x_{j'})) \bigg)
\end{align*}
Let us set $C_{\ell}(x,y,t,s):=C_\ell(t-s)\frac{2\ell+1}{4\pi}P_\ell(d(x,y))$. Then  $C_{\ell}(x,y,t,s)$ is the covariance function of the random field $T_{\ell;t}(x):= \sum_{m=-\ell}^{\ell} a_{\ell m} (t) Y_{\ell m}(x)$. By Proposition 1 \cite{LuMaXi} the function $C_{\ell}(x,x,t,t)C_{\ell}(x_1,x_2,t_1,t_2)-C_{\ell}(x,x_1,t,t_1) C_{\ell}(x,x_2,t,t_2)$ is still a covariance function, and as such it is positive definite. Thus,
\begin{align*}
    & \sum_{\ell=0}^{\infty} \frac{2\ell+1}{4\pi} C_\ell(0)\\ & \quad \times 
 \sum_{j,j^{\prime}=1}^{n} \gamma_j\gamma_{j'} \bigg( \frac{C_\ell(t_j-t_{j'})}{C_\ell(0)} P_\ell(d(x_{j'},x_j)) -\frac{C_\ell(t_j-t)C_\ell(t-t_{j'})}{C_\ell(0)C_\ell(0)}P_\ell(d(x,x_j))P_\ell(d(x,x_{j'})\bigg)
 \\
 & =  \sum_{\ell=0}^{\infty} \frac{4\pi}{(2\ell+1)C_\ell(0)} \left( C_{\ell}(x,x,t,t,) C_{\ell}(x_{j} , x_{j^{\prime}}, t_{j} , t_{j^{\prime}}) - C_{\ell}(x, x_{j} , t ,t_{j}) C_{\ell}(x, x_{j^{\prime}} , t , t_{j^{\prime}})  \right) \geq 0
\end{align*}
since  $C_\ell(0)$ is also positive. 
\end{proof}

\begin{lem}\label{lemma.taylor.expansion.Pl}
    Let $P_{\ell}$ be the $\ell$-th Legendre polynomial. Then there exists a constant $c$ such that for any $\theta$ small enough 
    \begin{equation}
        0< 1- P_{\ell} (\cos \theta )\leqslant c (\ell^{2} \theta^{2} + \ell^{4} \theta^{4} ).
    \end{equation}
\end{lem}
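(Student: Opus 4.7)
The plan is to run a second-order Taylor expansion of $P_\ell$ about the point $x=1$ and then substitute $x=\cos\theta$. Using $P_\ell(1) = 1$ and the classical identity $P_\ell'(1) = \ell(\ell+1)/2$, the Lagrange form of the remainder gives
\[
P_\ell(\cos\theta) = 1 - \frac{\ell(\ell+1)}{2}(1-\cos\theta) + \frac{P_\ell''(\eta)}{2}(1-\cos\theta)^2
\]
for some $\eta \in (\cos\theta, 1)$. Combined with the trivial bound $1 - \cos\theta \leq \theta^2/2$, this reduces the desired upper bound to a uniform control of $|P_\ell''(\eta)|$ on $[-1,1]$.

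This control follows from Markov's inequality, which states that $\|p'\|_\infty \leq n^2 \|p\|_\infty$ on $[-1,1]$ for any polynomial $p$ of degree $n$. Applying it twice to $P_\ell$ (using $\|P_\ell\|_\infty = 1$) yields $|P_\ell''(\eta)| \leq \ell^2(\ell-1)^2 \leq \ell^4$, whence
\[
1 - P_\ell(\cos\theta) \leq \frac{\ell(\ell+1)}{4}\theta^2 + \frac{\ell^4}{8}\theta^4 \leq c\bigl(\ell^2 \theta^2 + \ell^4 \theta^4\bigr)
\]
for a suitable absolute constant $c>0$.

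For the strict positivity I will appeal to the classical pointwise bound $|P_\ell(x)| \leq 1$ on $[-1,1]$, with equality inside $[-1,1]$ holding only at the endpoints $x = \pm 1$. Hence $P_\ell(\cos\theta) < 1$ for every $\theta \in (0,\pi)$, which in particular covers the small-$\theta$ regime. The entire argument rests only on the Lagrange remainder and the Markov--Bernstein inequality, both standard tools, so I do not foresee any genuine technical obstacle.
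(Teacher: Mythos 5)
Your proof is correct and follows essentially the same route as the paper: a second-order Taylor expansion of $P_\ell$ about $t=1$, combined with $P_\ell(1)=1$, $P_\ell'(1)=\ell(\ell+1)/2$, and $1-\cos\theta \lesssim \theta^2$. The only difference is in how the second derivative is bounded — the paper uses the Legendre-specific fact $|P_\ell''(\xi)| \le P_\ell''(1) = \frac{(\ell+2)!}{8(\ell-2)!} \le c\ell^4$, whereas you obtain the same $\ell^4$ order from Markov's inequality applied twice, which is an equally valid (and arguably more robust) tool.
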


\begin{proof}
    The Taylor expansion of $P_{\ell} (t)$ at $t=1$ reads
    \begin{align*}
        P_{\ell} (t) = 1 + P^{\prime}_{\ell} (t) (t-1)  + R_{\ell} (t)=1 + \frac{\ell (\ell +1)}{2} (t) (t-1)  + R_{\ell} (t).
    \end{align*}
    Note that 
    \begin{align*}
        & P^{\prime \prime}_{\ell} (t) \leqslant P^{\prime \prime}_{\ell} (1) = \frac{(\ell +2) !}{8 (\ell -2)!} \leqslant c \ell^{4},
    \end{align*}
    and 
    \begin{align*} 
       \vert  R_{\ell} (t) \vert = \frac{1}{2} \vert P^{\prime \prime}_{\ell} (\xi) \vert (t-1)^{2} \leqslant c \ell^{4} (t-1)^{2},
    \end{align*}
    for some $\xi \in [t,1]$. Thus,
    \begin{align*}
      &  1- P_{\ell} (\cos \theta) \leqslant  \frac{\ell (\ell +1)}{2} (1-\cos \theta ) +   c \ell^{4} (1-\cos \theta )^{2}
      \\
      & = \ell (\ell +1) \sin^{2} \frac{\theta}{2} + c \ell^{4} \sin^{4} \frac{\theta}{2} \leqslant c \left( \ell^{2} \theta^{2} + \ell^{4} \theta^{4} \right).
    \end{align*}
\end{proof}

\begin{lem}\label{lemma.integral.bound}
   For any $q<1$ and $0 < \delta <1$  there exists an explicit constant $c_{q, \delta}$ such that for any $0<a< \delta < 1$ 
   \begin{align}
       \int_{0}^{a} u^{-q} \sqrt{- \log u} du \leqslant c_{q, \delta} a^{1-q} \sqrt{-\log a}. \label{eqn.integral.bound}
   \end{align}
\end{lem}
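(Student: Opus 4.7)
The plan is to integrate by parts in order to isolate the leading boundary term of the desired form $a^{1-q}\sqrt{-\log a}$. Specifically, I would choose $v = \sqrt{-\log u}$ and $dw = u^{-q}\,du$, so that $dv = -\frac{1}{2u\sqrt{-\log u}}\,du$ and $w = u^{1-q}/(1-q)$. Since $q<1$, the antiderivative $u^{1-q}\sqrt{-\log u}$ tends to $0$ as $u \to 0^{+}$, and the boundary term at the lower endpoint vanishes. This yields
\begin{equation*}
\int_0^a u^{-q}\sqrt{-\log u}\,du \;=\; \frac{a^{1-q}}{1-q}\sqrt{-\log a} \;+\; \frac{1}{2(1-q)} \int_0^a \frac{u^{-q}}{\sqrt{-\log u}}\,du,
\end{equation*}
where the first summand already has the shape asked for in \eqref{eqn.integral.bound}, and it only remains to estimate the residual integral.

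For the residual, I would exploit the monotonicity of $-\log u$ on $(0,1)$: since $u\mapsto -\log u$ is decreasing, one has $-\log u \geq -\log a$ for every $u \in (0,a]$, and therefore
\begin{equation*}
\int_0^a \frac{u^{-q}}{\sqrt{-\log u}}\,du \;\leq\; \frac{1}{\sqrt{-\log a}}\int_0^a u^{-q}\,du \;=\; \frac{a^{1-q}}{(1-q)\sqrt{-\log a}}.
\end{equation*}
The hypothesis $a<\delta<1$ enters precisely here: writing $\frac{1}{\sqrt{-\log a}} = \frac{\sqrt{-\log a}}{-\log a} \leq \frac{\sqrt{-\log a}}{|\log \delta|}$ (which uses $-\log a \geq -\log \delta > 0$) converts the small factor $1/\sqrt{-\log a}$ back into $\sqrt{-\log a}$ up to a multiplicative constant depending only on $\delta$. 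Combining the two estimates would produce \eqref{eqn.integral.bound} with the explicit constant
\begin{equation*}
c_{q,\delta} \;=\; \frac{1}{1-q} \,+\, \frac{1}{2(1-q)^{2}\,|\log \delta|}.
\end{equation*}

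There is no real obstacle here beyond bookkeeping signs and the direction of the monotonicity; the role of the assumption $a<\delta$ is simply to keep $-\log a$ bounded away from $0$ so that the residual term can be absorbed into a constant multiple of the leading term. As a sanity check, one could instead split the interval of integration at $a/2$, bound $\sqrt{-\log u}$ on $(a/2,a)$ by $\sqrt{-\log(a/2)}$ and iterate on $(0,a/2)$, but the integration by parts above is cleaner and delivers an explicit constant in a single step.
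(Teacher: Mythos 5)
Your proof is correct and follows essentially the same route as the paper: integration by parts with $v=\sqrt{-\log u}$, $dw=u^{-q}\,du$, followed by pulling $(-\log u)^{-1/2}\leq(-\log a)^{-1/2}$ out of the residual integral and absorbing the resulting $1/|\log a|$ into a $\delta$-dependent constant. You even arrive at the same explicit constant $c_{q,\delta}=\frac{1}{1-q}+\frac{1}{2(1-q)^2|\log\delta|}$.
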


\begin{proof}
    A simple integration by parts yields 
    \begin{align*}
       & \int_{0}^{a} u^{-q} \sqrt{- \log u} du = \frac{1}{1-q}  a^{1-q} \sqrt{-\log a} + \frac{1}{2(1-q)} \int_{0}^{a} u^{-q} ( - \log u )^{- \frac{1}{2}} du
        \\
        & \leqslant  \frac{1}{1-q}  a^{1-q} \sqrt{-\log a} + \frac{1}{2(1-q)}  ( - \log a )^{- \frac{1}{2}}  \int_{0}^{a} u^{-q}  du
        \\
        & = \sqrt{\vert \log a \vert} a^{1-q} \left( \frac{1}{1-q} + \frac{1}{2(1-q)^{2}}  \frac{1}{\vert \log a \vert} \right) \leqslant c_{q, \delta}  \sqrt{\vert \log a \vert} a^{1-q},
    \end{align*}
    where 
    \begin{align*}
        c_{q,\delta} :=  \frac{1}{1-q} + \frac{1}{2(1-q)^{2}}  \frac{1}{\vert \log \delta \vert} .
    \end{align*}
\end{proof}

\begin{acknowledgement}
    The authors want to thank Domenico Marinucci for helpful discussions during the preparation of this work.
    This article is dedicated to the memory of Orietta di Biagio.
\end{acknowledgement}

\end{document}